\def\T{\text}
\newcommand{\Om}{\Omega}
\newcommand{\no}[1]{\|{#1}\|}
\newcommand{\eps}{\varepsilon}
\newcommand{\n}{\noindent}
\newcommand{\R}{\mathbb R}
\newcommand{\N}{\mathbb N}
\newcommand{\C}{\mathbb C}
\DeclareMathOperator{\supp}{supp}
\DeclareMathOperator{\Rre}{Re}
\newcommand{\bd}{\textrm{b}}
\newcommand{\p}{\partial}
\newcommand{\les}{\lesssim}
\newcommand{\ges}{\gtrsim}
\newcommand{\dbar}{\bar\partial}
\newcommand{\nn}{\nonumber}
\newcommand{\ep}{\epsilon}
\def\di{\partial}
\def\dib{\bar\partial}
\def\6#1{{\mathcal{#1}}}
\def\B{\6B}
\def\M{\6M}
\def\Im{\T{Im}\,}
\def\Re{\T{Re}\,}
\numberwithin{equation}{section}
\def\T{\text}
\theoremstyle{plain}
\newtheorem{theorem}{Theorem}[section]
\newtheorem{lemma}[theorem]{Lemma}
\newtheorem{proposition}[theorem]{Proposition}
\theoremstyle{definition}
\newtheorem{definition}[theorem]{Definition}
\theoremstyle{remark}
\newtheorem{remark}[theorem]{Remark}
\begin{document}
	
	\title[Regularity properties for the Bergman  projection]{Local regularity of the Bergman projection on a class of pseudoconvex domains of finite type}
	
	\author{Tran Vu Khanh and Andrew Raich}   
	\address{T.~V.~Khanh}
	\address{Department of Mathematics, International University, Vietnam National University-Ho Chi Minh City, Ho Chi Minh City, Vietnam}
	\email{tvkhanh@hcmiu.edu.vn}
	\address{A. Raich}
	\address{Department of Mathematical Sciences, SCEN 327, 1 University
		of Arkansas, Fayetteville, AR 72701, USA} 
	\email{araich@uark.edu}
	\begin{abstract} The purpose of this paper is to prove that if a pseudoconvex domains $\Omega\subset\mathbb{C}^n$ satisfies Bell-Ligocka's Condition R and admits a ``good" dilation, then the Bergman projection has local $L^p$-Sobolev and H\"older estimates. The good dilation structure is phrased in terms of uniform $L^2$ pseudolocal estimates for the Bergman projection on a family of anisotropic scalings. We conclude the paper by showing that $h$-extendible domains satisfy our hypotheses.
	\end{abstract}
		\thanks{The first author was supported by  the Vietnam National Foundation for Science and Technology Development (NAFOSTED) under grant number 101.02-2023.50}
	\thanks{The second author was supported by a grant from the Simons Foundation (707123, ASR). His work on this project was also supported by the
	National Science Foundation while working as a Program Director in the Division of Mathematical Sciences. Any opinions, findings, conclusions, or 
	recommendations expressed in this material are those of the authors and do not necessarily reflect the views of the National Science Foundation.}
	
	\subjclass{Primary 32A25, Secondary 32T25, 32W05, 32A50}
	
	\maketitle
	\section{Introduction}
	Let $\Om$ be a bounded pseudoconvex domain in $\C^n$ with smooth boundary $\bd\Om$.  The Bergman projection $B = B_\Om$ is one of the  
	fundamental objects associated to $\Om$; it is the orthogonal projection of $L^2(\Om)$ onto the closed subspace of square-integrable holomorphic functions on $\Om$. 
	We can express the Bergman projection via the integral representation
	\[
	Bv(z)=\int_\Om \B(z,w)v(w)\,dw,
	\]
	where $dw$ is the Lebesgue measure on $\Om$, and the integral kernel $\B$ is called the Bergman kernel. 
	
	Since the Bergman projection is defined abstractly on $L^2(\Omega)$, basic questions about $B$ include the local and global regularity and estimates in other spaces, namely
	\begin{enumerate}
		\item $C^\infty$ and $L^2_s$, and
		\item $L^p_s$ ($p\neq 2$) and the spaces of H\"older continuous functions $\Lambda_s$.
	\end{enumerate}
	
	When $\Om$ is of finite type (see \cite{Dang82}), Question 1 has been completely answered \cite{Cat83,Cat87,KoNi65,FoKo72}, and we therefore focus on aspects of Question 2
	that relate directly to the Bergman projection and tools that we can apply in $L^p_s(\Om)$ and H\"older spaces.  Condition R is a well known property introduced
	by Bell and Ligocka \cite{BeLi80} to study the smoothness of biholomorphic mappings and is intimately connected with Question 1. We will introduce a local
	version and refer to the original as global Condition R. Specifically, for a domain $\Om\subset\C^n$, we say that $\Om$ satisfies \emph{global Condition R} if for 
	every $s\ge 0$ there is $M=M_s$ such that 
	\[
	\no{Bu}_{L^2_s(\Om)}\le c_{s,\Om} \no{u}_{L^2_{s+M}(\Om)}
	\]
	for all $u\in L^2_{s+M}(\Om)$. 
	
	Global Condition R suggests the following local version. For a domain $\Om\subset\C^n$ and an open set $U\subset \C^n$, we say that $\Om$ satisfies \emph{ $L^2$ pseudolocal estimates for the Bergman projection in $U$} if 
		for every $s, m\ge 0$ there is $M=M_{s,m}$ and a constant $c = c_{s,M, U, \Om}>0$ such that 
		\begin{equation}\label{eqn:local cond R}
		\no{\chi_1 Bu}^2_{L^2_s(\Om)}\le c\left( \no{\chi_2 u}^2_{L^2_{s+M}(\Om)}+\no{\chi_3 Bu}^2_{L^2_{-m}(\Om)}\right)
		\end{equation}
		for all $u\in L^2_{s+M}(U\cap \Om)\cap L^2(\Om)$, where $\chi_j\in C^\infty_c(U)$, $j=1,2,3$ and $\chi_j\prec \chi_{j+1}$. 
	
	We will use the following notation throughout this paper. For cutoff functions $\chi, \chi'\in C^\infty_c(U)$, 
	we write $\chi\prec \chi'$ if $\chi'=1$ on $\supp(\chi)$. We use the notation $a\les b$ (respectively, $a \ges b$) if
	there exists a global constant $c>0$ so that $a \leq cb$ (respectively, $a \geq cb$).
	Moreover, we will use $\approx$ for the combination of $\les$ and $\ges$. Also, $L^p_s(\Om)$ are the usual $L^p$-Sobolev spaces of order $s$ on $\Om$. The space 
	$L^{p'}_{-s}(\Om)$ is the dual space of $(L^p_s(\Om))_0$, which is the closure of $C^\infty_c(\Om)$ in $L^p_s(\Om)$. Here, $p$ and $p'$ are H\"older conjugates.

	Global Condition R often arises as a consequence of estimates used to prove global regularity for the $\dib$-Neumann operator. 
	In particular, compactness estimates 
	(which themselves are a consequence of Catlin's Property (P) or McNeal's Property ($\tilde P$)) or the existence of a plurisubharmonic defining function both imply the global regularity of
	the $\dbar$-Neumann operator \cite{Cat84,McN02,BoSt91z}. 
	See \cite{Str08, Har11} for more general sufficient conditions for global regularity. 
	
	Similarly, pseudolocal estimates for the Bergman projection are a consequence of the local regularity theory for the $\dib$-Neumann problem. 
	It is classical that both (interior) elliptic and subelliptic estimates for the $\dib$-Neumann problem implies this local property
		\cite{KoNi65,FoKo72}. Ellipticity only holds for interior sets of domains. 
		Subellipticity itself is equivalent to a finite type condition on the boundary \cite{Cat83, Cat87}. Moreover, there are several classes of pseudoconvex domains of infinite type for which this local property holds \cite{Koh02, KhZa12, BaKhZa14, BaPiZa15}.
	
	A positive answer to Question 2 has been obtained when $\Om$ is both of finite type and satisfies one of the following hypotheses:
	\begin{enumerate}[ 1.]
		\item strict pseudoconvexity \cite{FoSt74e, PhSt77}.
		\item pseudoconvexity in $\C^2$ \cite{Chr88, FeKo88a, FeKo88, McN89, NaRoStWa89, ChNaSt92}
		\item pseudoconvexity in $\C^n$ and a Levi-form  with comparable eigenvalues \cite{Koe02}, or one degenerate eigenvalue \cite{Mac88}.
		\item decoupled \cite{FeKoMa90, ChDu06, NaSt06}.  
		\item convexity in $\C^n$ \cite{McN94, McSt94, McSt97}.
	\end{enumerate}
	
	The purpose of this paper is to give a full answer to Question 2 for a class of pseudoconvex domains of finite type 
	that admit a good anisotropic dilation, and these scalings turn out to be closely related to Catlin's multitype. 
	This class of domains includes $h$-extendible domains (defined below) as well as types 1-5 above \cite{Yu94,Yu95}. \\

	Recall that a defining function $\rho$ for a domain $\Omega\subset\C^n$ is a $C^1$ function defined on a neighborhood of $\bar\Omega$ 
	so that $\Om = \{z\in\C^n : \rho(z)<0\}$, $\bd\Om = \{z\in\C^n: \rho(z)=0\}$,
	and $\nabla \rho \neq 0$ on $\bd\Omega$. In this paper, we reserve $r=r_\Om$ for the signed distance to the boundary function.
	\begin{definition} \label{defn:good dilation}
		Let $\Om$ be a pseudoconvex domain in $\C^n$ with smooth boundary $\bd\Om$. Let $p\in \bd\Om$ and $z=(z_1,\dots,z_n)$ be coordinates
		so that $p$ is the origin and $\Re z_1$ is the real normal direction to $\bd\Om$ at $p$. 
		We say that $\Om$ has a \emph{good anisotropic dilation at $p$} if there exist 
		smooth, increasing functions $\phi_j:(0,1]\to \R^+$, $j=1,\dots,n$, so that $\frac{\phi_j(\delta)}{\delta}$ is decreasing, $\phi_1(\delta):=\delta$, and $\phi_j(1)=1$ for $j=1,\dots, n$. Additionally,
		for every small $\delta>0$, the  anisotropic dilation
		$$\hat z=\Phi_\delta(z)=\Big(\frac{z_1}{\phi_1(\delta)},\dots,\frac{z_n}{\phi_n(\delta)}\Big):\C^n\to\C^n$$ satisfies two conditions:
		\begin{enumerate}
			\item For each $j$, the inequality $$\left|\frac{\di r}{\di z_j}(z)\right|\lesssim \frac{\delta}{\phi_j(\delta)}$$
			holds for all  $z\in \Phi_\delta^{-1}(B(0,1))$. 
			\item There exists a neighborhood $U$ of $\hat p$ (independent of $\delta$) such that the Bergman operator $B_\delta$ of the scaled domain $\Om_\delta:=\Phi_\delta(\Om)$  
			satisfies $L^2$ pseudolocal estimates in $U$ with uniform estimates in $\delta$. 
				This means for all $\chi_j\in C^\infty_c(U)$, $j=1,2,3$,
				such that $\chi_1 \prec \chi_2 \prec \chi_3$ and for every $s,m>0$ there exists $M=M_{s,m}$ such that
			\begin{equation}\label{eqn:good Sobolev inequ. re: scaling, delta}
			\no{\chi_1 B_\delta u}_{L^2_{s}(\Om_\delta)}^{2}\le c_{s,m, U}\left(\no{\chi_2  u}_{L^2_{s+M}(\Om_\delta)}^{2} +\no{\chi_3 B_\delta u}^2_{L^2_{-m}(\Om_\delta)}\right)
			\end{equation}
			holds for all $u\in L^2_{s+M}(U\cap \Om_\delta)\cap L^2(\Om_\delta)$, where the constant $c_{s,m, U}$ is independent of $\delta$.
		\end{enumerate}
	\end{definition}

	Our first result contains pointwise estimates for derivatives of the Bergman kernel.  Also, the function $\pi$ maps points in $\Om$ that are near $\bd\Om$ to the closest point of $\bd\Om$.
	\begin{theorem}\label{main1} 
		Let $\Om$ be a pseudoconvex domain in $\C^n$ and  $(p,q)\in (\bar\Om\times \bar\Om)\setminus\{\text{Diagonal of }\bar\Om\times\bar\Om\}$. 
		Assume that either $\pi(p)$ or $\pi(q)$ admits a good anisotropic dilation $\Phi_\delta(z)=(\frac{z_1}{\phi_1(\delta)},\dots,\frac{z_n}{\phi_n(\delta)})$.
		Then 	\[
		\left|\left(\prod_{j=1}^n \frac{\di^{\alpha_j+\beta_j}}{\di p_j^{\alpha_j}\di\bar q_j^{\beta_j}}\right) \B(p,q)\right|\le C_{\alpha,\beta}\prod_{j=1}^n\bigg(\phi_j\Big(|r(p)|+|r(q)|+\sum_{k=1}^n\phi_j^*(|p_j-q_j|)\Big)\bigg)^{-2-\alpha_j-\beta_j}
		\]
		for nonnegative integers $\alpha_j, \beta_j$. The constant $C_{\alpha,\beta}$ is independent of $p,q$  and ${}^*$ denotes the function inversion operator, i.e., $\phi^*(\phi(\delta))=\delta$.
	\end{theorem}

	The second goal of this paper is to establish local $L^p$-Sobolev and H\"older estimates for the Bergman projection.  
	
	\begin{theorem}\label{main2}Let $\Om$ be a smooth, bounded, pseudoconvex domain in $\C^n$ satisfying global Condition $R$. Let  $U$ be an open set
		so that that either $U\subset\subset \Om$ or $\bd\Om\cap U$ is a set of good anisotropic dilation points.
		Then the Bergman projection $B$ is locally regular on the set $U$ in both $L^p_s$ with $s\ge 0$, $p\in (1,\infty)$ and $\Lambda_s$ with $s>0$. 
		
		Namely,  whenever
		$\chi_0, \chi_1 \in C^\infty_c(U)$ with $\chi_0 \prec \chi_1$, there exists constants $c_s, c_{s,p}>0$ so that
		\[
		\no{\chi_0 Bv}_{L^p_s(\Om)}\le c_{s,p}\big(\no{\chi_1 v}_{L^p_s(\Om)}+ \no{v}_{L^p_0(\Om)}\big)
		\]
		for $v\in L^p_s(\Om\cap U)\cap L^p(\Om)$, $s\ge 0$ and $p\in (1,\infty)$; and 
		\[
		\no{\chi_0 Bv}_{\Lambda_s(\Om)}\le c_s \big(\no{\chi_1 v}_{\Lambda_s(\Om)}+ \no{v}_{L^\infty(\Om)}\big)
		\]
		for $v\in \Lambda_s(\Om\cap U)\cap L^\infty(\Om)$ and $s>0$. 
	\end{theorem}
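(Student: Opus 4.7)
The strategy is to combine the pointwise Bergman kernel estimates of Theorem~\ref{main1} with the standard theory of non-isotropic singular integrals on spaces of homogeneous type, invoking global Condition R only to absorb the piece whose integration variable lies away from $z_0$. Choose the neighborhood $U$ on which Theorem~\ref{main1} applies, and write
\[
\chi_0 Bv \;=\; \chi_0 B(\chi_1 v) \;+\; \chi_0 B((1-\chi_1)v),
\]
treating the ``near'' and ``far'' pieces separately.

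\textbf{Near piece via non-isotropic Calder\'on--Zygmund theory.} The set $U\cap\bar\Om$, equipped with $\delta_{NI}$ and Lebesgue measure, is a space of homogeneous type whose non-isotropic ball of radius $\rho$ at a boundary point has volume comparable to $\prod_{j=1}^n \phi_j(\rho)^2$. Theorem~\ref{main1} with all $\alpha_j=0$ reads $|\B(z,w)|\les \prod_j \phi_j(\delta_{NI}(z,w))^{-2}$, which is comparable to the reciprocal of the volume of the ball $B_{NI}(z,\delta_{NI}(z,w))$---precisely the size condition for a Calder\'on--Zygmund kernel on this space---while the bounds with $\alpha_j\ge 1$ deliver the corresponding Lipschitz smoothness. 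Combined with the $L^2$-boundedness of $B$, the Coifman--Weiss extension theorem then yields $L^p$-boundedness of $\chi_0 B(\chi_1\cdot)$ for $1<p<\infty$. To climb from $L^p$ to $L^p_s$, I would differentiate $Bv$ under the integral and commute each Euclidean derivative through the cutoffs; using the higher-derivative bounds of Theorem~\ref{main1} together with the Hermitian symmetry $\B(z,w)=\overline{\B(w,z)}$ to transfer $z$-derivatives of $\B$ into $\bar w$-derivatives that can be integrated by parts onto $v$, each step keeps the resulting kernel in the admissible CZ class, and an induction on $s$ closes the estimate. The H\"older case is analogous: estimate $Bv(z)-Bv(z')$ through the Lipschitz bound on $\B$ and convert from the non-isotropic H\"older modulus to the Euclidean one via $|z-z'|\les \max_j \phi_j(\delta_{NI}(z,z'))$.

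\textbf{Far piece, and main obstacle.} On $\supp(\chi_0)\times \supp(1-\chi_1)$ the supports are Euclidean-separated, so $\delta_{NI}(z,w)\ges 1$ there. Where both $z$ and $w$ lie in $U$, Theorem~\ref{main1} gives uniform boundedness of $\B$ and all its derivatives, so the far operator is smoothing and the required bound by $\no{v}_{L^p_0(\Om)}$ is immediate. For $w\notin U$ the kernel estimates are unavailable, so I would apply global Condition R to the holomorphic $L^2$ function $B((1-\chi_1)v)$---together with a duality argument---to control $\no{B((1-\chi_1)v)}_{L^2_s(\Om)}$ in terms of $\no{v}_{L^2(\Om)}$, and then invoke interior regularity for holomorphic functions (using that $\chi_0$ localizes $z$ away from $\supp(1-\chi_1)$) to refine this to the claimed $L^p_s$ and $\Lambda_s$ bounds involving only $\no{v}_{L^p_0(\Om)}$ or $\no{v}_{L^\infty(\Om)}$. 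The hardest step, I expect, is precisely this last passage: extracting sufficient off-diagonal boundary regularity of $\B$ from the combination of Theorem~\ref{main1} and global Condition R in order to close the far piece in the plain $L^p$ or $L^\infty$ norm of $v$. A secondary difficulty for the near piece is the bookkeeping required to reconcile the non-isotropic homogeneous-type analysis with the Euclidean $L^p_s$ and $\Lambda_s$ scales appearing in the statement, where the anisotropic weights $\phi_j$ interact in a delicate way with commutators against the cutoffs $\chi_0,\chi_1$.
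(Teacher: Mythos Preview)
Your overall architecture---near/far split, kernel estimates for the near piece, global Condition~R for the far piece---matches the paper's, and your Coifman--Weiss route to $L^p_0$-boundedness of the near operator is a legitimate alternative to the paper's direct Schur test on the majorant kernel $\prod_j\phi_j(\delta_{NI}(z,w))^{-2}$ with weight $|r(w)|^{-\eta}$. The genuine gap is in climbing from $L^p_0$ to $L^p_s$: Hermitian symmetry $\B(z,w)=\overline{\B(w,z)}$ does \emph{not} convert $\partial_{z_j}\B(z,w)$ into $\partial_{\bar w_j}\B(z,w)$; it only gives $\partial_{z_j}\B(z,w)=\overline{\partial_{\bar z_j}\B(w,z)}$, which is a derivative in the second slot of the \emph{swapped} kernel and is useless for integration by parts in $w$. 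Since $D^\alpha_z\B$ is genuinely more singular than a CZ kernel on your homogeneous space, you cannot simply differentiate and reapply the $s=0$ machinery either. The paper's device is different and essential: for $|\alpha|=m$ it writes $D^\alpha_z\B(z,w)$ as an $m$-fold $t$-integral of $\partial^m_{\Re w_n}D^\alpha_z\B(z,(w',w_n-t))$ plus far terms, then decomposes $\partial_{\Re w_n}=T+aL_n$ with $T$ tangential to $\bd\Om$ and $L_n$ a $(1,0)$ field in $w$, uses anti-holomorphicity ($L_n\B=0$) to drop the normal part, and integrates the remaining \emph{tangential} $T^j$ by parts onto $v$ without boundary terms. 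The $t$-integration then cancels $m$ powers of $\delta_{NI}$, leaving a kernel dominated by $\B_0$.

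There is a second, smaller gap in your far piece: $\supp(\chi_0)$ meets $\bd\Om$, so ``interior regularity for holomorphic functions'' is not available for $B((1-\chi_1)v)$ there, and global Condition~R alone only gives $L^2_s$ control by $\|(1-\chi_1)v\|_{L^2_{s+M}}$, which you do not have. The paper instead proves a pointwise off-diagonal kernel bound (Theorem~\ref{gbkernel}): for $z\in U$ and $|z-w|\ges 1$ one has $|D^\alpha_zD^\beta_{\bar w}\B(z,w)|\le c_{\alpha,\beta}$, obtained by combining local Condition~R near $z$ with global Condition~R applied, via duality, to an approximate identity $\psi_t\to\delta_w$. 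This makes the far piece a bounded-kernel operator, yielding the $\|v\|_{L^p_0}$ or $\|v\|_{L^\infty}$ term directly. For $\Lambda_s$ the paper also takes a different route from your difference estimate: it reduces via the Hardy--Littlewood criterion to $|\nabla^m\chi_0Bv(z)|\les |r(z)|^{-(m-s)}$ and proves this by decomposing $\zeta_0 v=g_k+b_k$ at scale $2^{-k}=|r(z)|$ and combining two pointwise bounds on $B^\alpha_\epsilon$ (one of order $|r(z)|^{-m}$ against $\|b_k\|_{L^\infty}$, one of order $|r(z)|^{-1}$ against $\sum_j\|D^jg_k\|_{L^\infty}$).
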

	We remind the reader of the definition of the H\"older spaces $\Lambda_s(\Om)$ below (Definition \ref{defn:Holder}).
	
	Theorem \ref{main2} is only useful if there exist domains which satisfy the hypotheses, and we now show there are large classes of domains which do so.
	Let $\Om$ be a pseudoconvex domain in $\C^{n}$ and $p$ be a boundary point. There are several notions of the ``type" of a point that aim to measure the curvature of $\bd\Omega$ at $p$. Two
	of the most widely known are the
	\begin{itemize}
		\item D'Angelo (multi)-type, $\Delta(p)=(\Delta_{n}(p),\dots, \Delta_1(p))$ where $\Delta_k(p)$ is the $k$-type, which measures the maximal order of contact of $k$-dimensional varieties 
		with $\bd\Om$ at $p$; and
		\item Catlin multitype, $\mathcal M(p)=(m_1(p),\dots,m_n(p))$, where $m_k(p)$  is the optimal weight assigned to the coordinate direction $z_k$.
	\end{itemize} 
	With these definitions, $\Delta_n(p)=m_1(p)=1$. In \cite{Cat87}, Catlin proved
	that $\M(p)\le \Delta(p)$ in the sense that $m_{n-k+1}(p)\le \Delta_k(p)<\infty$ for $1\le k\le n$. The following definition is given by Yu:
	\begin{definition}\label{defn:h-extendible}
		A pseudoconvex domain is called \emph{$h$-extendible at $p$} if $\Delta(p)=\M(p)$. If $\Om$ is $h$-extendible at $p$, $\M(p)$ is called the multitype at $p$. 
		A pseudoconvex domain is called \emph{$h$-extendible} if every boundary point is $h$-extendible.
	\end{definition}
	
	In \cite{Yu94}, Yu proves $h$-extendibility at $p$ is equivalent to the existence of coordinates $z=(z_1,z')$ centered at $p$ and a defining function $\rho$ that can be expanded near $0$ as follows:
	$$\rho(z)=\Re z_1+P(z')+R(z).$$
	Here $P$ is a $(\frac{1}{m_2},\dots,\frac1{m_n})$-homogeneous plurisubharmonic polynomial, i.e., 
	\begin{equation}\label{P}
	P(\delta^{1/m_2}z_2,\dots, \delta^{1/m_n}z_n)=\delta P(z_2,\dots,z_n)
	\end{equation}
	and contains no pluriharmonic terms. The function $R$ is smooth and satisfies 
	\begin{equation}\label{R}
	R(z)=o \left(\sum_{j=1}^n|z_j|^{m_j+\alpha}\right)
	\end{equation}
	for some $\alpha>0$.

	The $h$-extendible property allows for a pseudoconvex domain 
	$\Om$ to be approximated by a pseudoconvex domain from the outside.
	See \cite{BoStYu95,Yu94,Yu95} for a discussion.

	\begin{theorem}\label{cor1}Let $\Om$ be an $h$-extendible, bounded  domain in $\C^n$. Then $\Om$ satisfies global condition
		$R$ and  $\bd\Om$ is a set of good anisotropic dilation points. Consequently, the Bergman projection is locally regular in the spaces
		$L^p_s(\Om)$ with $1< p< \infty$, $s\ge 0$ and $\Lambda_s(\Om)$ with  $s>0$.
	\end{theorem}
		
The proof of Theorem~\ref{cor1} reveals a new property of $h$-extendible points which we record as our final theorem.
\begin{theorem}\label{max-type} Let $\Om$ be a pseudoconvex domain in $\C^n$. Assume that the open set $S\subset b\Om$ is $h$-extendible. Then the function $T$ defined by
\[
T(p) = \sum_{k=1}^n \frac{1}{m_k(p)},\quad \text{for }p\in S,
\]
is lower semicontinuous.
\end{theorem}
\begin{remark}
Since the Catlin multitype takes on a finite number of values on $S$, the lower semicontinuity is equivalent to the following maximality property for $T$:
For every point $p\in S$ there exists a neighborhood $V\subset S$ of $p$ such that for every $q\in V$, $T(p) \leq T(q)$.
\end{remark}

	The paper is organized as follows. In Section \ref{sec:Bergman,Szego smoothness}, we recall results on local $L^2_s$ estimates and $C^\infty$-regularity of the $\dib$-Neumann operator and
	the Bergman projection. 
	In Section \ref{sec:Berg kern est}, we give a proof of Theorem~\ref{main1}. 
	In Section \ref{sec:Proof of Bergman kernel Sobolev estimates}, we prove Theorem~\ref{main2}.
	In Section \ref{sec:Proof of the {Theorem}}, we prove Theorem~\ref{cor1} and Theorem~\ref{max-type}.

	\section*{Acknowledgements}
	The authors wish to express their gratitude to Professor Emil Straube of Texas A\&M University for his comments and suggestions. 

	%
	%
	\section{Uniform estimates on the Bergman kernel}\label{sec:Bergman,Szego smoothness}
	\subsection{The smoothness of kernels: local behavior}
	In this subsection, $\Om$ is a smooth, bounded pseudoconvex domain and $U$ is an open set in which $L^2$ pseudolocal estimates for the Bergman projection hold.
	We start our estimate of the Bergman kernel by proving that $\B(z,w)$ is smooth near the diagonal and satisfies uniform estimates when the points $z$ and $w$ are a uniform distance apart. Throughout the paper we use the notation that if $\alpha = (\alpha_1,\dots,\alpha_n)$ is an $n$-tuple of nonnegative integers, then 
$D^\alpha = \prod_{j=1}^{n}\frac{\di^{\alpha_j}}{\di z_j^{\alpha_j}}$.

	\begin{theorem}\label{bkernel}Let $\Om\subset\C^n$ be a smooth, bounded pseudoconvex domain and $U$ be an open set in $\C^n$. Suppose that $L^2$ pseudolocal estimates for the Bergman projection hold on $U$.
		Then the Bergman kernel is smooth on $((\bar\Om\cap U)\times  (\bar\Om\cap U))\setminus\{\T{Diagonal of $\bd\Om\cap U$}\}$. Moreover, for every $c>0$ and multi-indices $\alpha$ and $\beta$,
		there exists a positive constant $c_{\alpha,\beta,U}$ so that for every $(z,w)\in ((\bar\Om\cap U)\times  (\bar\Om\cap U))$ 
		satisfying 
		\[
		\delta_I(z,w):=|r(z)|+|r(w)|+|z-w|\geq c,
		\]
		then
		\[
		|D^\alpha_pD^\beta_{\bar q} \B(z,w)|\le c_{\alpha,\beta, U}
		\]
		and $c_{\alpha,\beta,U}$ is independent of $z,w$, and $\Om$. 
	\end{theorem}
We refer to $\delta_I(p,q)$ as the \emph{isotropic distance} of $\Om$, though we recognize that $\delta_I(\cdot,\cdot)$ is usually neither isotropic nor a distance function of $\C^n$. We introduce
a ``nonistropic distance" in Lemma \ref{newlm} below.
	
	\begin{proof} We wish to apply $B$ to an approximation of the identity, so we let $\psi \in C^\infty_c(B(0,1))$ where $\psi\geq0$, radial, and $\int_{\C^n} \psi\, dw=1$. Let 
	$w\in\Om\cap U$ and set
		$\psi_t(\zeta) = t^{-2n}\psi((\zeta-w)/t)$. 

		When $z\neq w$, the fact that $\B(z,w)$ is harmonic in $w$ means that for $t$ small enough
		\[
		D^\beta_{\bar w}\B(z,w)= \int_{\Om}  \B(z,\zeta) D^\beta_{\bar w}\psi_t(\zeta)\, d\zeta = (-1)^{|\beta|} \int_{\Om}  \B(z,\zeta) D^\beta_{\bar \zeta}\psi_t(\zeta)\, d\zeta
		=(-1)^{|\beta|} (BD^\beta \psi_t)(z).
		\]
		Since the Bergman operator is locally regular in $C^\infty$, 
		$$D^\alpha_{z}D^\beta_{\bar w}\B(z,w)=(-1)^{|\beta|} (D^\alpha BD^\beta \psi_t)(z)\in C^\infty(\bar\Om\cap U)$$

		The hypothesis $\delta_I(z,w)\geq c$ implies that $|z-w|\geq \frac{c}{3}$,  $|r(z)|\geq \frac{c}{3}$, or $|r(w)|\geq \frac{c}{3}$. \\
		
		{\bf Case 1: $|z-w|\geq \frac{c}{3}$.} We choose $\epsilon$ sufficiently small such that $B(z,2\epsilon)\cap B(w,2\epsilon)=\emptyset$ and $B(z,2\epsilon),B(w,2\epsilon)\subset U$. 
		Let  $\chi_1\prec\chi_2\prec \chi_3$ such that $\chi_1=1$ on $B(z,\epsilon)$ and $\supp(\chi_3)\subset B(z,2\epsilon)$. By the Sobolev Lemma, we have (for $t<\ep/2$)
		\begin{equation}\label{eqn:basic Bergman/Sobolev est}
		|D^\alpha_{z}D^\beta_{\bar w}\B(z,w)|\le \sup_{\xi \in B(z,\epsilon)\cap \bar\Om} |D^\alpha_{z}D^\beta_{\bar w}\B(\xi,w)| \le  c_\alpha\no{\chi_1 BD^\beta \psi_t}_{L^2_{n+1+|\alpha|}(\Om)}.
		\end{equation}
		Using \eqref{eqn:local cond R} with $u=D^\beta \psi_t$, we obtain 
		\begin{align*}
		\no{\chi_1 BD^\beta \psi_t}_{L^2_{n+1+|\alpha|}(\Om)}& \le  c_\alpha\left(\no{\chi_2 D^\beta \psi_t}_{L^2_{s}(\Om)} +\no{\chi_3BD^\beta \psi_t}_{L^2(\Om)}\right)\\
		& =  c_\alpha \no{\chi_3BD^\beta \psi_t}_{L^2(\Om)}
		\end{align*}
		where $c$ depends on $|\alpha|$ and $n$ but not on $\Om$. Here the equality follows from the fact that $\supp(\chi_j)\cap \supp(\psi_t)=\emptyset$. On the other hand, by the density of smooth,
		compactly supported functions in $L^2(\Omega)$,
		\[
		\no{\chi_3 BD^\beta \psi_t}_{L^2(\Om)}=\sup\{|(\chi_3 B D^\beta\psi_t,v)_{L^2(\Om)}|:\no{v}_{L^2(\Om)}\le 1,\ v\in C^\infty_c(\Omega)\}.
		\]
		Using the self-adjointness of $B$ and the pairing of $(L^2_{n+1}(\Omega))_0$ with its dual $L^2_{-(n+1)}(\Omega)$, we have
		\begin{align*}
		|(\chi_3 B D^\beta\psi_t,v)_{L^2(\Om)}|=&|(\psi_t,D^\beta B\chi_3v)_{L^2(\Om)}|\\
		=&|(\psi_t,\tilde\chi_1D^\beta B\chi_3v)_{L^2(\Om)}|\\
		=&\no{\psi_t}_{L^2_{-(n+1)}(\Om)}\no{\tilde\chi_1D^\beta B\chi_3v}_{L_{n+1}^2(\Om)},
		\end{align*}
		where $\tilde\chi_1$ is chosen such that $\tilde\chi_1=1$ on $B(w,\epsilon)$. Additionally, choose $\tilde \chi_j \in C^\infty_0(\C^n)$, $j=2,3$ so that  $\tilde\chi_1\prec \tilde\chi_2\prec \tilde\chi_3$ and $\supp(\tilde\chi_3)\subset B(w,2\epsilon)$.
		Note that this forces $\supp(\tilde\chi_2)\cap \supp(\chi_3)=\emptyset$. Since $\psi_t\to\delta_w$ in $(C^0(\C^n))^*$ and $L^2_{n+1}(\C^n) \subset C^0_0(\C^n)$ by Sobolev's Lemma, it follows
		from duality that 
		\[
		\no{\psi_t}_{L^2_{-(n+1)}(\Om)}< c.
		\]
		for some $c>0$ that is independent of $t$ and $\Om$.
		By a second application of the inequality \eqref{eqn:local cond R}
		for cut-off functions $\tilde\chi_1, \tilde\chi_2$ and $\tilde\chi_3$ 
		and the fact that $\tilde\chi_2 \chi_3=0$ by support considerations, we obtain 
		\begin{eqnarray}\label{2.3}\begin{split}
		\big\|\tilde\chi_1D^\beta B\chi_3v\big\|_{L_{n+1}^2(\Om)}
		&\le c_\beta( \no{\tilde\chi_2 \chi_3v}_{L_{\tilde{s}}^2(\Om)} +\no{\tilde\chi_3 B\chi_3v}_{L^2(\Om)} ) \\
		&=c_\beta \no{\tilde\chi_3 B\chi_3v}_{L^2(\Om)} \leq c_\beta \|B \chi_3 v\|_{L^2(\Om)} \leq c_\beta \|\chi_3 v\|_{L^2(\Om)}\le c_\beta,
		\end{split}
		\end{eqnarray}
		since $B$ is an orthogonal projection on $L^2(\Om)$. Here, $c_\beta$ depends on $\beta$ and $n$ but does not depends on $\Om$.
		
		{\bf Case 2: $|r(z)|\geq \frac{c}{3}$ or $|r(w)|\geq \frac{c}{3}$.} Assume $|r(z)|\geq c$. If $w$ is near the boundary then $|z-w|\geq \frac c2$, and the conclusion follows from Case 1. Otherwise  $z$ is near $w$,
		and we can use the interior elliptic regularity of the $\dib$-Neumann problem and \eqref{eqn:local cond R} (and Sobolev's Lemma, as above) to obtain 
		\[
		|D^\alpha_zD^\beta_{\bar w}\B(z,w)|\le c_{\alpha,\beta}
		\]
		where $c_{\alpha,\beta}$ is independent of both $z,w$ and the diameter of $\Om$ when $|r(z)|, |r(w)|\geq \frac c2$.		
		
	In both cases, we have proven that 
		\[
		|D^\alpha_zD^\beta_{\bar w}\B(z,w)|\le c_{\alpha,\beta}
		\]
		uniformly for  $w\in \Om\cap U$. As a consequence of the $L^2$-Sobolev regularity of the Bergman projection on finite type domains and the 
		Sobolev Embedding Theorem, this inequality still holds for $w\in \bar\Om\cap U$.
		This completes the proof of Theorem~\ref{bkernel}.
	\end{proof}

	\subsection{The smoothness of kernels: local/nonlocal}
	In this subsection we establish smoothness of the Bergman kernel in the case that one point is in 
	a set for which $L^2$  pseudolocal estimates for the Bergman projection hold and the other is arbitrary. 
	We observe that our estimates may depend on diameter of $\Om$, however, we will only apply these estimates in a fixed domain.\\

	\begin{theorem}\label{gbkernel} Let $\Om\subset\C^n$ be a smooth, bounded pseudoconvex domain and $U$ be an open set in $\C^n$. 
	Suppose that $L^2$ pseudolocal estimates for the Bergman projection hold on $U$ and global Condition $R$ holds for $\Om$.
		Then the Bergman kernel is smooth on $((\bar\Om\cap U)\times  \bar\Om)\setminus\{\T{Diagonal of $\bd\Om\cap U$}\}$. Moreover, for fixed $c>0$ and multi-indices $\alpha$ and $\beta$, whenever
		there exists $c_{\alpha,\beta}>0$ so that for every $(z,w)\in ((\bar\Om\cap U)\times \bar\Om)$  satisfying 
		\[
		|z-w|\geq c,
		\] 
		it follows that 
		\[
		|D^\alpha_zD^\beta_{\bar w} \B(z,w)|\le c_{\alpha,\beta}.
		\]	
	\end{theorem}
	
	\begin{proof}
		Adopting the notation and argument from the first part of the proof of Theorem~\ref{bkernel}, we have 
		\begin{align*}
		|D^\alpha_{z}D^\beta_{\bar w}\B(z,w)|&\le\no{\chi_1 BD^\beta \psi_t}_{L^2_{n+1+|\alpha|}(\Om)}\\
		&\leq c_{\alpha, m} \big(\no{\chi_2 D^\beta \psi_t}_{L^2_{s}(\Om)}+\no{\chi_3B D^\beta \psi_t}_{L^2_{-m}(\Om)})\\
		&=c_{\alpha, m}\no{\chi_3 BD^\beta \psi_t}_{L^2_{-m}(\Om)}
		\end{align*}
		where $m\geq 0$ will be chosen later and $c_{\alpha, m}$ depends on $\alpha$ and $m$. 
		However, 
		\begin{align*}
		\no{B D^\beta \psi_t}_{L^2_{-m}(\Om)}&= \sup\{|(B D^\beta\psi_t,v)_{L^2(\Om)}|:\no{v}_{L^2_{m}(\Om)}\le 1\}\\
		&=\sup\{|(\psi_t,D^\beta Bv)_{L^2(\Om)|}:\no{v}_{L^2_{m}(\Om)}\le 1\}\\
		&\le\sup\{\no{\psi_t}_{L^2_{-(n+1)}(\Om)}\no{Bv}_{L^2_{2n+|\beta|}(\Om)}:\no{v}_{L^2_{m}(\Om)}\le 1\}\\
		&\leq c_\beta\sup\{\no{v}_{L^2_{n+1+|\beta|+M}(\Om)}:\no{v}_{L^2_{m}(\Om)}\le 1\}\\
		&\leq c_\beta ,
		\end{align*}
		where the second inequality follows from the facts that $\no{\psi_t}_{L^2_{-2n}(\Om)}\leq C$ {for a constant $c>0$ that is independent of $t$} and the global Condition R with the choice $m\ge n+1+|\beta|+M$. 
	\end{proof}

	\begin{remark} In \cite{Boa87b}, Boas proved a result similar to Theorem~\ref{gbkernel} with the stronger hypothesis that $z\in \bd\Om$ is a point of finite type and Catlin's Property $(P)$ holds. 
	\end{remark}
	
	%
	%
	\section{Proof of Theorem~\ref{main1}}\label{sec:Berg kern est}

	The following lemma follows easily by the definitions. 
	\begin{lemma}\label{vectorscaling}  Let $u$ be a smooth function on $\Om$. For $z\in \Om$, denote $\hat z:=\Phi_\delta(z)$ and $\hat u(\hat z):=u(z)$. Then 
		\[
		\left(\prod_{j=1}^nD_{z_j}^{\alpha_j}\right) u(z)=\left(\prod_{j=1}^n\left(\phi_j(\delta)\right)^{-\alpha_j}\right)\left(\prod_{j=1}^nD_{\hat z_j}^{\alpha_j}\right) \hat u(\hat z).
		\]
	\end{lemma}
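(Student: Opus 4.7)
The plan is to verify the identity by straightforward application of the chain rule to the substitution $\hat z_j = z_j/\phi_j(\delta)$, and then iterate. Since the dilation $\Phi_\delta$ is diagonal in the coordinates, the computation decouples variable-by-variable, so there are no cross terms to worry about.

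First, I would compute a single derivative. Writing $u(z) = \hat u(\Phi_\delta(z))$ and using the chain rule,
\[
D_{z_j} u(z) = \sum_{i=1}^n \frac{\partial \hat z_i}{\partial z_j}\, D_{\hat z_i} \hat u(\hat z) = \frac{1}{\phi_j(\delta)}\, D_{\hat z_j} \hat u(\hat z),
\]
since $\partial \hat z_i/\partial z_j = \delta_{ij}/\phi_j(\delta)$ (Kronecker $\delta$). Thus each $D_{z_j}$ acts on a function in the $\hat z$-variables by the operator $\phi_j(\delta)^{-1} D_{\hat z_j}$.

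Next I would iterate. Because $\phi_j(\delta)$ is a constant with respect to $z$, we get by induction on $\alpha_j$ that
\[
D_{z_j}^{\alpha_j} u(z) = \phi_j(\delta)^{-\alpha_j}\, D_{\hat z_j}^{\alpha_j} \hat u(\hat z).
\]
Since the operators $D_{z_j}$ for different $j$ commute, as do the operators $D_{\hat z_j}$, applying all of these successively yields
\[
\Bigl(\Pi_{j=1}^n D_{z_j}^{\alpha_j}\Bigr) u(z) = \Bigl(\prod_{j=1}^n \phi_j(\delta)^{-\alpha_j}\Bigr) \Bigl(\Pi_{j=1}^n D_{\hat z_j}^{\alpha_j}\Bigr) \hat u(\hat z),
\]
which is the claim. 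There is no real obstacle here; the statement is essentially a bookkeeping identity recording how each holomorphic derivative picks up one factor of $\phi_j(\delta)^{-1}$ under the anisotropic scaling. The only thing worth remarking is that the same formula holds with $D_{z_j}$ replaced by $D_{\bar z_j}$ (or any mix), since $\Phi_\delta$ has real (in fact positive) diagonal entries, so the analogous computation with conjugated variables gives identical factors.
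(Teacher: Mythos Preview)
Your proof is correct and is exactly the straightforward chain-rule verification the paper has in mind; the paper itself simply states that the lemma ``follows easily by the definitions'' and gives no further argument.
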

	\begin{proof}[Proof of Theorem \ref{main1}] The proof has three steps.
	
		Step 1. We observe that the result is only in question for points close to $\bd\Om$, so we fix $\sigma>0$ and focus on points of distance at most $\sigma$ from $\bd\Om$. 
		Therefore, we fix a point
		$p\in\Om$ with $r(p) >-\sigma$, translate and rotate (unitarily) the domain so that $\pi(p)=0$ and $p$ is on the $\Rre z_1$ axis. Next, we fix a second point $q$ in $B(0, \sigma) \cap \Om$. 
		
		Step 2. We employ a nonisotropic scaling based on the good anisotropic dilation functions $\phi_j$ and a scaling constant $A\geq 1$ that we determine later but will depend only on $\sigma$. We then observe how the
		Bergman kernel behaves under the scaling.
		
		Step 3. We conclude by showing that if $A > \sqrt{n+1}/\sigma$, then $\hat p$ and $\hat q$ are $\hat \Om \cap B(0,\sigma)$. 
		In this case, the (scaled, isotropic) distance between them is bounded away from $0$, independently
		of $p$ and $q$. We can therefore apply Theorem \ref{bkernel} because the constant in Theorem \ref{bkernel} depends only on $\alpha, \beta$, and $B(0,\sigma)$ and NOT on $\hat\Omega$. 
		We now turn to the detailed arguments of Steps 1-3.
		
		Step 1. By Theorem~\ref{bkernel}, we only need to work on the case that $\delta_{I,\Om}(p,q)$ is sufficiently small, say, $\delta_{I,\Om}(p,q) \leq \sigma$ for some fixed $\sigma>0$. 
			Without loss of generality, we can assume that $\pi(p)$ is a point with a good anisotropic dilation 
		$$
		\Phi_\delta(z)=\Big(\frac{z_1}{\phi_1(\delta)},\dots,\frac{z_n}{\phi_n(\delta)}\Big)
		$$ 
		with associated coordinates $z$ and a fixed neighborhood $\hat U = B(0,\sigma)$ of the origin $\pi(p)$ such that $p\in \Re z_1$ and $p,q\in \hat U$.
		Denote $\hat p= \Phi_{\delta}(p)$, $\hat q=\Phi_{\delta}(q)$, and ${\Om}_{\delta}=\Phi_{\delta}(\Om)$.
		Define $\hat r_{\delta}(\hat z):=\frac{1}{\delta}r(\Phi_\delta^{-1}(\hat z))$ for $\hat z\in\C^n$ . Then the function $\hat r_{\delta}$ is a defining function of $\Om_{\delta}$. Moreover, for all $j=1,\dots, n$ we have  
		$$\left|\frac{\di \hat r_{\delta}}{\di \hat z_j}\right|=\left|\frac{\phi_j(\delta)}{\delta}\frac{\di r(\Phi_\delta^{-1}(\hat z))}{\di z_j}\right|\lesssim 1, \quad \T{for all  }\quad \hat z\in \hat U,$$ 
		where the inequality follows by Definition \ref{defn:good dilation}, part 1. 
		In fact, when $j=1$,  the inequality $\lesssim$ can be replaced by the equality $\approx$ since $\Rre z_1$ is the normal direction to $\bd\Om$ at $\pi(p)$
		(see Definition \ref{defn:good dilation}).   
		Thus
		$$\left|\nabla_{\hat z}\hat r_\delta(\hat z)\right|\approx 1, \quad \T{for }\quad \hat z\in \hat U,$$
		uniformly in $\delta$. 	This means that $\hat r_{\delta}(\hat z)$ can be considered as a distance function from  $\Om_{\delta} \cap \hat U$ to $\bd\Om_{\delta}$. \\
		
		Step 2. By the transformation law  for the Bergman kernel under biholomorphic mappings, we have 
		\begin{eqnarray}\label{transformation}
		\B_\Om(p,q)=\det J_\C\{\Phi_\delta(p) \}\B_{\hat \Om}(\hat p,\hat q) \overline{\det J_\C\{\Phi_\delta(q)\}}=\prod_{j=1}^n\left(\phi_j(\delta)\right)^{-2}\B_{\Om_{\delta}}(\hat p,\hat q).
		\end{eqnarray}
		Combining \eqref{transformation} with Lemma~\ref{vectorscaling}, we obtain
		\begin{equation}
		\label{B:formula}
		\left(\prod_{j=1}^n \frac{\di^{\alpha_j+\beta_j}}{\di p_j^{\alpha_j}\di\bar q_j^{\beta_j}}\right) \B(p,q)=\prod_{j=1}^n\left(\phi_j(\delta)\right)^{-2-\alpha_j-\beta_j}\left(\prod_{j=1}^n \frac{\di^{\alpha_j+\beta_j}}{\di \hat p_j^{\alpha_j}\di\overline{\hat q}_j^{\beta_j}}\right)\B_{ \Om_{\delta}}(\hat p,\hat q).
		\end{equation}

		For $A\ge 1$ to be determined later and $\sigma$ suitably small (so the expressions below are defined),	we set 
		$$\delta=\left(A|r(p)|+A|r(q)|+\sum_{j=1}^n\phi_j^*(A|p_j-q_j|)\right)\ge |r(p)|+|r(q)|+\sum_{j=1}^n\phi_j^*(|p_j-q_j|).$$  	
		Since the $\phi_j$'s are increasing,   $$\prod_{j=1}^n\left(\phi_j(\delta)\right)^{-2-\alpha_j-\beta_j}\le \prod_{j=1}^n\left(\phi_j\left(|r(p)|+|r(q)|+\sum_{j=1}^n\phi_j^*(|p_j-q_j|)\right)\right)^{-2-\alpha_j-\beta_j}.$$	
		Thus the proof of this theorem is complete if we show that there exists $C_{\alpha,\beta} >0$ so that 
		\begin{eqnarray}
		\label{new1}	\left(\prod_{j=1}^n \frac{\di^{\alpha_j+\beta_j}}{\di \hat p_j^{\alpha_j}\di\overline{\hat q}_j^{\beta_j}}\right)\B_{ \Om_{\delta}}(\hat p,\hat q) \leq C_{\alpha,\beta}
		\end{eqnarray}
		uniformly in $\hat p$ and $\hat q$. 
		
		Step 3. We are going to apply Theorem~\ref{bkernel} to prove \eqref{new1}. In order to do it, we must to check that 
		$\hat p,\hat q\in \hat U$ and with our choice of $\delta$ that 
		$\delta_{I,\Om_\delta}(\hat p,\hat q) \geq c$ independently of $p$ and $q$ (our choice of $\delta$ will ensure that $\hat p$ and $\hat q$ are sufficiently far apart.)
		We have 
		
		$$|\hat p|^2=|\hat p_1|^2=\left|\frac{\Re p_1}{\delta}\right|^2\le \left|\frac{r(p)}{A|r(p)|}\right|^2= \frac{1}{A^2};$$ 
		and
		$$|\hat p-\hat q|^2=\sum_{j=1}^n|\hat p_j-\hat q_j|^2=\sum_{j=1}^n\left(\frac{|p_j-q_j|}{\phi_j(\delta)}\right)^2\le\sum_{j=1}^n\left(\frac{|p_j-q_j|}{\phi_j(\phi_j^*(A|p_j-q_j|))}\right)^2=\frac{n}{A^2}.$$
		This implies $\hat p,\hat q\in B(0,\frac{\sqrt{n+1}}{A})$. Choosing $A > \frac{\sqrt{n+1}}{\sigma}$, we note that $\hat p,\hat q\in \hat U$.
		Since $\frac{\phi_j(\delta)}{\delta}$ is decreasing and $\delta\ge \phi_j^*(A|p_j-q_j|)$ for $j=1,\dots,n$, it follows
		\[\frac{\phi_j(\delta)}{\delta}\le \frac{\phi_j(\phi_j^*(A|p_j-q_j|))}{\phi_j^*(A|p_j-q_j|)}=\frac{A|p_j-q_j|}{\phi_j^*(A|p_j-q_j|)}.\]
		This is the same as
		\[ \frac{|p_j-q_j|}{\phi_j(\delta)}\ge \frac{\phi_j^*(A|p_j-q_j|)}{A\delta}.\]
		Therefore, the isotropic distance $\delta_{I,\Om_{\delta}}(\hat p,\hat q)$ satisfies
		\begin{eqnarray}
		\begin{split}
		\delta_{I,\Om_{p,\delta}}(\hat p,\hat q)&=| \hat r_{\delta}(\hat p)|+|\hat r_{\delta}(\hat q)|+|\hat p-\hat q|\\
		&= \frac{|r(p)|}{\delta}+\frac{|r(q)|}{\delta}+\sqrt{\sum_{j=1}^{n}\frac{|p_j-q_j|^2}{\phi_j^2(\delta)}}\\
		&\ge\frac{|r(p)|}{\delta}+\frac{|r(q)|}{\delta}+\frac{\sum_{j=1}^n\phi_j^*(A|p_j-q_j|)}{A\sqrt{n}\delta}\\
		 &\ge\frac{|r(p)|}{A\sqrt{n}\delta}+\frac{|r(q)|}{A\sqrt{n}\delta}+\frac{\sum_{j=1}^n\phi_j^*(A|p_j-q_j|)}{A\sqrt{n}\delta}
		&= \frac{1}{A\sqrt{n}}.
		\end{split}
		\end{eqnarray}
		This completes the proof of Theorem~\ref{main1} for the Bergman kernel.
	\end{proof}

	%
	%
	\section{Proof of Theorem~\ref{main2}}\label{sec:Proof of Bergman kernel Sobolev estimates} 
	We first consider the case when $\bar U$ is a compact subset of $\Om$. 
	It is well known that elliptic estimates for the $\dib$-Neumann problem hold for forms with compact support in $U$ and hence  $L^2$ pseudolocal estimates for the Bergman projection hold on $U$. Theorem~\ref{gbkernel} therefore implies that 
	$$D^\alpha_z\left(\chi_0(z) \B(z,w)\right)\le  c_{\alpha, \chi_0, d(\bd\Om, \bd U)}$$
	for every cut off function $\chi_0$ such that $\supp(\chi_0)\subset U$. 
	Thus the operator $D^\alpha \chi_0B$ is continuous in $L^p(\Om)$ for $0<p\le \infty$. Namely, we  get the desired inequality
	$$\no{\chi_0Bv}_{L^p_s(\Om)}\lesssim \no{v}_{L^p_0(\Om)}$$
	for every $s\ge 0$, $p\in (1,\infty]$, and  $v\in L_0^p(\Om)$. For the case that $\bd\Om\cap U=S$ is a set of good anisotropic dilation points, we have the following lemma. 
	\begin{lemma}\label{newlm} Let $V_\eps$ be a compact set of $U$ such that $d(\bd U,\bd V_\eps)\ge \epsilon$. Then there exists $c_\eps>0$ such that 
		$$\left|\Big(\prod_{j=1}^{n}\frac{\di^{\alpha_j}}{\di z_j^{\alpha_j}}\Big) \B(z,w)\right|\le c_{\eps,\alpha} \prod_{j=1}^n\phi_j^{-2-\alpha_j}(\delta_{NI}(z,w))$$
		for $z\in V_\eps\cap \bar\Om$ and $w\in \bar\Om$, where $\alpha=(\alpha_1,\dots,\alpha_n)$ and 
		$$\delta_{NI}(z,w):=|r(z)|+|r(w)|+\sum_{j=1}^n\phi_j^*(|z_j-w_j|).$$
	\end{lemma}
	\begin{proof} Denote $S_\eps=\{z\in \Om: d(z,b\Om)<\epsilon\}$. If $z\in V_\eps\cap S_\eps$,  then $\pi(z)\in \bd\Om\cap U$ is a good anisotropic dilation point
		by hypothesis. By Theorem~\ref{main1}, we have
		\begin{equation}\label{k1}
		\left|\Big(\prod_{j=1}^{n}\frac{\di^{\alpha_j}}{\di z_j^{\alpha_j}}\Big)\right|\le c \prod_{j=1}^n\phi_j^{-2-\alpha_j}(\delta_{NI}(z,w)),\quad \T{for } w\in\bar\Om
		\end{equation}
		Otherwise if $z\in (\Om\cap V_\eps)\setminus{S_\eps}$ then $|r(z)|\ge \eps$. By Theorem~\ref{gbkernel}, we have 
		\begin{equation}\label{k2}
		\left|\Big(\prod_{j=1}^{n}\frac{\di^{\alpha_j}}{\di z_j^{\alpha_j}}\Big) \B(z,w)\right|\le c_{\epsilon,\alpha} \quad \T{for } w\in\bar\Om.
		\end{equation}
		The proof follows from \eqref{k1} and \eqref{k2}.
	\end{proof}
	The remainder of the proof of Theorem~\ref{main2}  uses the ideas of McNeal and Stein \cite{McSt94}, though their hypotheses on the type are global while ours are local.
	We use Lemma \ref{newlm} to overcome this problem.
	
	\subsection{Local $L^p_s$ estimates}\label{subsec:local L^p}
	Let $s \geq 0$ be an integer. Let $\{\zeta_m: m=0,1,\dots,s\}$ be a sequence of cutoff functions in $C^\infty_c(U)$ so that $\zeta_0 = \chi_1$, $\zeta_s = \chi_0$, and
	$\zeta_{m} \prec \zeta_{m-1}$ for all $m=1,\dots,s$.
	For $\epsilon>0$, we define $\psi_\ep \in C^\infty(\C^n\times\C^n)$ so that
	$$\psi_\epsilon(z,w)=\begin{cases}1& \T{if~~} |z-w|<\epsilon,\\
	0 &\T{if~~} |z-w|>2\epsilon.\end{cases}.$$
	We may choose $\epsilon$ sufficiently small such that 
	\begin{eqnarray}\label{supp}\zeta_0(w)=1 \text{ if there exists  $1 \leq m \leq s$ and $z \in \supp\zeta_m$ so that $w \in \supp(\psi_\ep(z,\cdot))$}.
	\end{eqnarray}
	We observe that 
	\begin{eqnarray}\label{4.2}\begin{split}
	\no{\zeta_m Bv}_{L^p_m}^p &\les \sum_{|\alpha|=m} \no{\zeta_mD^\alpha Bv}^p_{L^p_0}+\no{\zeta_{m-1}Bv}^p_{L^p_{m-1}}\\
	&=\sum_{|\alpha|=m} \int_{\Om}\left|\int_{\Om}\zeta_m(z)D_z^\alpha \B(z,w)v(w)\, dw\right|^pdz+\no{\zeta_{m-1}Bv}^p_{L^p_{m-1}}\\
	&\les \sum_{|\alpha|=m}\Bigg[  \int_{\Om}\left|\int_{\Om}\zeta_m(z)D_z^\alpha \B(z,w)\psi_\epsilon(z,w) v(w)dw\right|^p\, dz\\
	&\ \ +\int_{\Om}\left|\int_{\Om\cap \{|z-w|>\epsilon\}}|\zeta_m(z) D_z^\alpha\B(z,w)||v(w)|dw\right|^pdz \Bigg]
	+\no{\zeta_{m-1}Bv}^p_{L^p_{m-1}}\\
	&\les \sum_{|\alpha|=m} \no{B^\alpha_\epsilon v}_{L_0^p}^p+\no{v}^p_{L_0^p}+\no{\zeta_{m-1}Bv}^p_{L^p_{m-1}}
	\end{split}\end{eqnarray}
	where $B^\alpha_\epsilon$ is the operator with integral kernel $\zeta_m(z)(D^\alpha_z\B(z,w))\psi_\epsilon(z,w)$.
	Here the last inequality follows by Theorem~\ref{gbkernel} and consequently  the constant hidden in the final $\les$ depends on $\ep$. To complete the proof of theorem
	for continuity in $L^p$-Sobolev spaces, we need to show  that 
	for every multiindex $\alpha$ with $|\alpha|=m$, 
	\begin{eqnarray}\label{new}
	\no{B^\alpha_\epsilon v}_{L_0^p}\les \no{\zeta_0v}_{L_m^p}.
	\end{eqnarray}
	
	Let $B_0$ be the operator with associated integral kernel 
	$$\B_0(z,w)=\zeta_0(z)\prod_{j=1}^n \phi_j(\delta_{NI}(z,w))^{-2}\zeta_0(w).$$
	
	The proof of \eqref{new} will follow immediately from Lemma \ref{lmBm} and Lemma \ref{lmB0}.
	\begin{lemma}\label{lmBm} Let $\alpha$ be a multiindex of length $m$. Then for $z\in \Omega$,
		\[
		|(B^\alpha_\epsilon) v(z)|\les \sum_{j=0}^m (B_0|(D^j\zeta_0v)|)(z).
		\]
	\end{lemma}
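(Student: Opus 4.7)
The strategy is to trade the excess singularity of the kernel of $B^\alpha_\epsilon$, as compared with the kernel of $B_0$, for $m = |\alpha|$ real derivatives on $v$, by iterated integration by parts in the $w$-variable.

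First, by density I may assume $v \in C^\infty(\overline{\Om})$. For $z \in \supp(\zeta_m)$, the support of $w \mapsto \psi_\epsilon(z,w)$ is contained in $B(z,2\epsilon)$, which by \eqref{supp} lies in $\{\zeta_0(w) = 1\}$, so I may replace $v$ by $\zeta_0 v$ in the integrand defining $B^\alpha_\epsilon v(z)$. By Theorem~\ref{main1}, $|D^\alpha_z \B(z,w)| \lesssim \prod_{j=1}^n \phi_j(\delta_{NI}(z,w))^{-2-\alpha_j}$; compared with the kernel $\prod_j \phi_j^{-2}$ of $B_0$, this has an excess factor $\prod_j \phi_j^{-\alpha_j}$ of total order $m$.

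I convert this excess singularity into $m$ real derivatives on $\zeta_0 v$ by $m$ iterated integrations by parts in $w$, using the complex-analytic structure of $\B$: holomorphic in $z$, antiholomorphic in $w$, and conjugate-symmetric, $\B(z,w) = \overline{\B(w,z)}$. The conjugate symmetry lets me re-express each $z$-derivative of $\B(z,w)$ as the complex conjugate of a derivative in the second argument of $\B$ evaluated at $(w,z)$; this derivative can then be integrated by parts in $w$ against the product $\psi_\epsilon(z,\cdot)\,\zeta_0 v$. By the Leibniz rule, each IBP step produces either (i) a new derivative of $\zeta_0 v$, contributing a factor $|D(\zeta_0 v)|$, or (ii) a derivative of $\psi_\epsilon$, which is a uniformly bounded multiplier yielding a lower-order term with fewer derivatives on $\zeta_0 v$. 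After all $m$ iterations, the integrand is pointwise dominated by $\prod_k \phi_k(\delta_{NI})^{-2} \sum_{j=0}^m |D^j(\zeta_0 v)|$; integration in $w$, together with $\zeta_m(z) \le \zeta_0(z)$, then yields the claimed bound $\sum_{j=0}^m B_0(|D^j(\zeta_0 v)|)(z)$.

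The main obstacle is executing the integration by parts correctly. Since the kernel derivatives $D^\alpha_z$ act in the $z$-variable while the integration is in $w$, the transfer is not a direct IBP in $w$; it proceeds through the conjugate-symmetry reformulation above, which converts $z$-derivatives into derivatives in the second slot of $\B$ suitable for IBP in $w$. In addition, IBP on $\Om$ produces boundary contributions on $\bd\Om$, which are controlled by the fact that $\psi_\epsilon(z,\cdot)$ localizes the integration to $B(z,2\epsilon)$ and by the antiholomorphicity of $\B$ in $w$, which makes the relevant boundary integrals either vanish or be absorbed into the lower-order terms $|D^j(\zeta_0 v)|$ for $j<m$. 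Uniformity in $z \in \supp(\zeta_m)$, including for $z$ near $\bd\Om$, follows from the uniform kernel estimates provided by Theorem~\ref{main1}.
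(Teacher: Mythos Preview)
Your plan has a genuine gap at its central step. The conjugate symmetry $\B(z,w)=\overline{\B(w,z)}$ does \emph{not} convert a $z$-derivative of $\B(z,w)$ into a $w$-derivative: differentiating both sides in $z_j$ gives $\partial_{z_j}\B(z,w)=\overline{\partial_{\bar z_j}\B(w,z)}$, which is still a derivative in the variable $z$ (now sitting in the second slot of $\B$), not in the integration variable $w$. The Bergman kernel is not a convolution kernel, so there is no relation of the type $\partial_{z_j}\B(z,w)=-\partial_{w_j}\B(z,w)$ to exploit; $z$-derivatives and $w$-derivatives of $\B$ are genuinely independent objects. Consequently no amount of integration by parts in $w$ will strip the $D^\alpha_z$ off the kernel, and you have no mechanism to reduce the singularity from $\prod_j\phi_j^{-2-\alpha_j}$ to $\prod_j\phi_j^{-2}$. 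Your treatment of boundary terms is also unjustified: for $z$ close to $\bd\Om$ the cutoff $\psi_\epsilon(z,\cdot)$ does not avoid $\bd\Om$, and antiholomorphicity in $w$ alone does not kill boundary integrals.

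The paper's argument is quite different and does not attempt to remove the $D^\alpha_z$. Instead it writes, via an $m$-fold fundamental theorem of calculus in a parameter $t$ that translates $w$ in the real normal direction $\Re w_n$, $D^\alpha_z\B(z,w)$ as an $m$-fold $t$-integral of $\partial_{\Re w_n}^m D^\alpha_z\B(z,w_t)$ plus far-away remainder terms bounded by Theorem~\ref{bkernel}. The normal derivative $\partial_{\Re w_n}$ is then split as $T+aL_n$ with $T$ tangential; antiholomorphicity kills the $L_n$ piece, and the tangential $T$'s are integrated by parts onto $\psi_\epsilon v$ with \emph{no} boundary terms. What remains is $D^\alpha_z\B(z,w_t)$ against derivatives of $\zeta_0 v$, but now $w_t$ has been pushed into the interior so that $\delta_{NI}(z,w_t)\approx\delta_{NI}(z,w)+t$; integrating $(\delta_{NI}(z,w)+\sum t_k)^{-m-2}$ over $t_1,\dots,t_m$ recovers the $B_0$ kernel $\prod_j\phi_j(\delta_{NI}(z,w))^{-2}$. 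The gain in integrability thus comes from the $t$-integration, not from transferring $D^\alpha_z$ to $w$.
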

	\begin{lemma}\label{lmB0} The operator
		$$B_0:L^p_0(\Om)\to L^p_0(\Om)$$
		for every $1<p<\infty$.
	\end{lemma}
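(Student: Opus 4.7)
The approach is to apply the weighted $L^p$-Schur test with a negative power of the distance to the boundary as the weight. Specifically, if $w(z) > 0$ is a positive measurable weight satisfying
\[
\int_\Om \B_0(z,\zeta)\, w(\zeta)^{p'}\, d\zeta \le A\, w(z)^{p'} \quad \text{and}\quad \int_\Om \B_0(z,\zeta)\, w(z)^p\, dz \le B\, w(\zeta)^p,
\]
then a standard factorization via H\"older's inequality yields $\|B_0\|_{L^p\to L^p}\le A^{1/p'} B^{1/p}$. Because $\delta_{NI}(z,\zeta)$ is symmetric in its arguments, $\B_0$ is symmetric and the two inequalities interchange under $z \leftrightarrow \zeta$; consequently, taking $w(z) = |r(z)|^{-\alpha}$ it suffices to verify only the first inequality, uniformly in $z$, for some $\alpha \in (0, 1/\max(p,p'))$.

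The verification is carried out by a dyadic decomposition with respect to the non-isotropic distance. Fix $z \in \supp(\zeta_0)\cap \Om$ and set
\[
A_k(z) := \{\zeta \in \supp(\zeta_0)\cap\Om : 2^{-k-1} < \delta_{NI}(z,\zeta) \le 2^{-k}\}, \quad k\ge 0.
\]
Since $\delta_{NI}(z,\zeta) \ge |r(z)|$, the annulus $A_k(z)$ is non-empty only for $0\le k\le k_0 := \lceil \log_2(1/|r(z)|)\rceil$. The definition of $\delta_{NI}$ further confines $A_k(z)$ to the polydisc $\{\zeta : |z_j-\zeta_j|\le \phi_j(2^{-k}),\ |r(\zeta)|\le 2^{-k}\}$, so in coordinates where $\Re z_1$ is normal to $\bd\Om$ at $z_0$ one computes
\[
\int_{A_k(z)} |r(\zeta)|^{-\alpha p'}\, d\zeta \les \int_0^{2^{-k}} r^{-\alpha p'}\, dr\cdot 2^{-k}\prod_{j=2}^n \phi_j(2^{-k})^2 \les 2^{k\alpha p'} \prod_{j=1}^n \phi_j(2^{-k})^2
\]
as long as $\alpha p' < 1$, where the last step uses $\phi_1(\delta)=\delta$. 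On $A_k(z)$, the doubling $\phi_j(\delta)\le 2\phi_j(\delta/2)$ (which follows from $\phi_j(\delta)/\delta$ being decreasing) yields $\B_0(z,\zeta)\les \prod_j \phi_j(2^{-k})^{-2}$, and combining the two estimates cancels the volume factor,
\[
\int_{A_k(z)} \B_0(z,\zeta)\,|r(\zeta)|^{-\alpha p'}\, d\zeta \les 2^{k\alpha p'}.
\]
Summing this geometric series over $0\le k\le k_0$ gives the first Schur inequality with constant $\les 2^{k_0\alpha p'} \approx |r(z)|^{-\alpha p'} = w(z)^{p'}$.

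Choosing any $\alpha \in (0, 1/\max(p,p'))$ ensures $\alpha p' < 1$ and $\alpha p < 1$, so both Schur inequalities hold by the $z \leftrightarrow \zeta$ symmetry of $\B_0$, and the lemma follows. The main technical step is the volume bound $|A_k(z)| \les \prod_j \phi_j(2^{-k})^2$ for non-isotropic annuli; everything else is bookkeeping with the geometric series, the doubling of $\phi_j$, and the choice of weight. That volume estimate is a purely geometric consequence of the coordinate structure of $\delta_{NI}$ together with $\phi_1(\delta)=\delta$, and it is here that the hypotheses on the good dilation genuinely enter.
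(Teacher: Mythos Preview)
Your proof is correct and shares the same overarching idea as the paper's: both establish the Schur-test inequality
\[
\int_{\Om}\B_0(z,\zeta)\,|r(\zeta)|^{-\gamma}\,d\zeta \lesssim |r(z)|^{-\gamma}\qquad(0<\gamma<1)
\]
and then finish by the standard H\"older factorization. The difference lies in how this key integral is estimated. You slice $\Om\cap U$ into non-isotropic annuli $A_k(z)=\{2^{-k-1}<\delta_{NI}(z,\zeta)\le 2^{-k}\}$, use the doubling property of the $\phi_j$ to bound the kernel on each annulus, and pair this with the volume estimate $\int_{A_k}|r(\zeta)|^{-\gamma}\,d\zeta\lesssim 2^{k\gamma}\prod_j\phi_j(2^{-k})^2$; the resulting geometric series truncates at $k_0\approx\log_2(1/|r(z)|)$. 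The paper instead writes the integral in polar coordinates in each $\zeta_j-z_j$, invokes the one-variable inequality $\int_0^{\delta_0}\frac{d\rho}{A+\phi_j^*(\rho)}\lesssim \phi_j(A)/A$ (quoted from \cite{Kha13}), and peels off the variables one at a time. Your dyadic argument is self-contained and avoids the external lemma; the paper's iterated integration is more explicit about how each coordinate contributes and yields the same intermediate estimate $I_\eta(z)\lesssim|r(z)|^{-\eta}$ that is later reused in the H\"older-regularity section (Lemma~\ref{supnormK}). Either route is perfectly adequate here.
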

	
	\begin{proof}[Proof of Lemma \ref{lmBm}]
		Without loss of generality, we translate and rotate (unitarily) $\Om$ so that $U$ is a neighborhood of the origin, and $\Rre \frac{\p}{\p w_1}$ is the (outward) unit normal to $\bd\Om$ at the origin.  Also, denote 
			$w' = (w_2,\dots,w_n)$.
		We can write  
		\begin{align*}
		B^\alpha_\epsilon v(z) &= \int_\Om (\zeta_m(z)D^\alpha_z\B(z,w))\psi_\epsilon(z,w)v(w)\,dw\\
		&= I+II
		\end{align*}
		where 
		\[
		I=(-1)^m\int_\Om\int_0^{3\epsilon}\cdots\int_0^{3\epsilon}
		\frac{d}{d t_m}\cdots \frac{d}{d t_1}\Big(\zeta_m(z)D^\alpha_z\B\big(z,(w_1-(t_1+\cdots + t_m),w')\big)\Big)\psi_\epsilon(z,w)v(w)\,dt_1\cdots dt_m\, dw
		\]
		and 
		\[
		II=\sum_{j=1}^m\int_\Om \zeta_m(z)D^\alpha_z\B(z,(w_1-3\epsilon j,w'))\psi_\epsilon(z,w)v(w)dw. 
		\]
		For $II$, since 
		$|z-(w_1-3\epsilon j,w')|\ge 3\epsilon j-|z-w|\ge \epsilon$ for $j\ge 1$ and $w\in \supp\psi_\epsilon(z,\cdot)$, we can use Theorem \ref{bkernel} to obtain 
		\[
		|II|\les \int_{\Om}|\zeta_m(z)\psi_\epsilon(z,w)v(w)|\, dw\les \int_{\supp(\psi_\ep(z,\cdot))} |\zeta_0(w)v(w)|\, dw\les (B_0|\zeta_0v|)(z),
		\]
		where the second inequality follows by \eqref{supp} and the last one by the bound  $1\les|\B_0(z,w)|$ which follows from the support condition on $\psi_\ep$.
		
		To estimate $I$, we notice that 
		\[
		\frac{d}{d t_m}\cdots \frac{d}{d t_1} \B(z,w_t)= (-1)^m \frac{\di^m}{\di (\Re w_1)^{m}}\B(z,w_t)
		\]
		where $w_t=(w_1-\sum_{j=1}^m t_j,w')$. We can write 
		\[
		\frac{\di}{\di \Re w_1}=T+aL_1,
		\]
		where $a\in C^\infty$ and $T$ is a tangent to $\bd\Om$ acting in $w$. On other hand we know that  $\B(z,w)$ is anti-holomorphic in $w$, so $L_1\B(z,w_t)=0$ (here $L_1$ acts $w$).
		Thus, we have 
		\begin{align*}
		(-1)^m\frac{\di^m}{\di (\Re w_1)^m}\B(z,w_t)=\sum_{j=0}^m a_jT^j \B(z,w_t)
		\end{align*}
		where each $a_j$ is a $C^\infty$-function in $w$. Using integration by parts, we obtain
		\[
		I=\sum_{j=0}^m\int_\Om\int_0^{3\epsilon}\cdots\int_0^{3\epsilon} (D^\alpha_z\B(z,w_t))\left(\zeta_m(z)(T^*)^j(a_j(w)\psi_\epsilon(z,w)v(w))\right)dt_1\cdots dt_m\, dw 
		\]
		where $T^*$ is the $L^2(\Om)$-adjoint of $T$. 
		
		To start the estimate of the integrand on $I$, we use Taylor's theorem and observe
		\[
		r(w_t)=r(w_1-t,w')=r(w)-\frac{\di r(w)}{\di(\Re w_1)}t+\frac{\di^2 r(\tilde w)}{\di^2(\Re w_1)} t^2
		\]
		where $\tilde w$ lies in the segment $[w,w_t]$. Since $\dfrac{\di r(w)}{\di(\Re w_1)}>0$ and $t\in [0,3m\epsilon ]$,  for small $\epsilon$, it follows
		\[
		|r(w_t)|\approx |r(w)|+t.
		\]

		Since $\phi_1(\delta)=\delta$ and $\delta\le \phi(\delta)$ for $j=2,\dots,n$ and any small $\delta\leq 1$, 
		\[
		|D^\alpha_z\B(z,w_t)|\le c_\ep (\delta_{NI}(z,w_t))^{-2-m}\prod_{j=2}^n \phi_j(\delta_{NI}(z,w_t))^{-2}.
		\]
		for $z\in \supp(\zeta_m)$ by Lemma~\ref{newlm}.
		By the definition of $\delta_{NI}(z,w_t)$ and the fact that $(w_t)_j = w_j$ for $j=2,\dots,n$, we have
		\begin{align*}
		\delta_{NI}(z,w_t)&\approx |r(z)|+|r(w_t)|+|z_1-(w_t)_1|+\sum_{j=2}^n\phi_j^*(|z_j-(w_t)_j|)\\
		&\approx |r(z)|+|r(w)|+t+|z_1-(w_t)_1|+\sum_{j=2}^n\phi_j^*(|z_j-w_j|)\\
		&\approx |r(z)|+|r(w)|+t+|z_1-w_1|+\sum_{j=2}^n\phi_j^*(|z_j-w_j|)\\
		&\approx \delta_{NI}(z,w)+t.
		\end{align*}
		Hence, $\phi_j(\delta_{NI}(z,w_t))\ges \phi_j(\delta_{NI}(z,w))$ for $j=2,\dots,n$. 
		
		Next, by Theorem~\ref{main1}, 
		\begin{align}
		\int^{3\epsilon}_0\cdots\int^{3\epsilon}_0|D^\alpha_z\B(z,w_t)|dt_1\cdots dt_m 
		&\les \prod_{j=2}^n \phi_j(\delta_{NI}(z, w))^{-2}\int^{3\epsilon}_0\cdots\int^{3\epsilon}_0\frac{dt_1\cdots dt_m}{(\delta_{NI}(z,w)+\sum_{j=1}^mt_j)^{m+2}}\nn\\
		&\les(\delta_{NI}(z,w))^{-2} \prod_{j=2}^n \phi_j(\delta_{NI}(z, w))^{-2} = \prod_{j=1}^n \phi_j(\delta_{NI}(z, w))^{-2}. \label{eqn:IBP est of varB}
		\end{align}
		
		Moreover, from \eqref{supp} we have  
		\[
		\sum_{j=0}^m|\zeta_m(z)(T^*)^j(a_j(w)\psi_\epsilon(z,w)v(w))|\les\sum_{j=0}^m|D^j_w(\zeta_0(w)v(w))|.
		\]
		Therefore, 
		\[
		|I| \les \int_\Om \sum_{j=0}^m\B_0(z,w)|(D^j\zeta_0v)(w)|\,dw=\sum_{j=0}^m(B_0|D^j(\zeta_0v)|)(z).
		\]
	\end{proof}

	\begin{proof}[Proof of Lemma~\ref{lmB0}]  That $\phi_j''(\delta)<0$ is a consequence of the fact that $\frac{\phi_j(\delta)}\delta$ is decreasing. Therefore,
	$\phi_j(a+b) \geq \frac{1}{2}\left( \phi_j(a) + \phi_j(b)\right)$ which yields
		\begin{align*}
		\phi_j(\delta_{NI}(z,w))&\ges |z_j-w_j|+\phi_j\left(|r(z)|+|r(w)|+\sum_{k=2}^{j-1}\phi_k^*(|z_k-w_k|)\right)
		\end{align*}
		for $j=2,\dots,n$. 
		Thus, for $0\leq\eta<1$ we have 
		\begin{align*}
		I_{\eta}(z)&=\int_{\Om}|\B_0(z,w)||r(w)|^{-\eta}dw\\
		&\les \int_{\Om\cap U}\frac{dw}{|r(w)|^\eta\delta_{NI}^2(z,w)\prod_{j=2}^n\left(\phi_j(\delta_{NI}(z,w))\right)^2}\\
		&\les\int_{0}^{\delta_0}\dots\int_0^{\delta_0}\frac{\rho_2\dots\rho_n dr\,d \rho_2\dots d\rho_n\,dy_1}{r^\eta(y_1+r+|r(z)|
			+\sum_{j=2}^n\phi_j^*(\rho_j))^2\prod_{j=2}^n\left(\rho_j+\phi_j(r+|r(z)|+\sum_{k=2}^{j-1}\phi_k^*(\rho_k))\right)^2}\\
		&\les\int_{0}^{\delta_0}\dots\int_0^{\delta_0}\frac{ dr\,d \rho_2\dots d\rho_n}{r^\eta(r+|r(z)|+\sum_{j=2}^n\phi_j^*(\rho_j))\prod_{j=2}^n\phi_j(r+|r(z)|+\sum_{k=2}^{j-1}\phi_k^*(\rho_k))}
		\end{align*}
		where the second inequality follows by using polar coordinates in $w_j-z_j$ for $j=2,\dots,n$ with $\rho_j:=|w_j-z_j|$ and the variable changes $r:=-r(w)$, $y_1=|\Im z_1-\Im w_1|$. Using the hypotheses that $\phi_j$
		is increasing and $\frac{\phi_j(\delta)}{\delta}$ is decreasing implies that $\frac{\phi_j^*(\delta)}{\delta}$ is increasing
		for $\delta$ sufficiently small. So we may use 
		the argument of Lemma~3.2 in \cite{Kha13} to establish 
		\begin{equation}\label{Kineq}
		\int^{\delta_0}_0\frac{d\rho}{A_j+\phi_j^*(\rho_j)}\les \frac{\phi_j(A_j)}{A_j}
		\end{equation}
		where $A_j=r+|r(z)|+\sum_{k=2}^{j-1}\phi_k^*(\rho_k)$.
		Thus, 
		\begin{align*}
		I_{\eta}(z)
		&\les\int_{0}^{\delta_0}\dots\int_0^{\delta_0}\frac{dr\,d \rho_2\dots d\rho_{n-1}}{r^\eta(r+|r(z)|+\sum_{j=2}^{n-1}\phi_j^*(\rho_j))\prod_{j=2}^{n-1}\phi_j(r+|r(z)|+\sum_{k=2}^{j-1}\phi_k^*(\rho_k))}\\
		&\les\cdots\les \int_0^{\delta_0}\frac{dr}{r^\eta(r+|r(z)|)}\approx \frac{1}{|r(z)|^{\eta}}.
		\end{align*}
		Let $q$ be the conjugate exponent of $p$ and $v\in L^p(\Om)$. An application of H\"older's inequality establishes
		\begin{align*}
		|(B_0 v)(z)|^p=&\left(\int_\Om \B_0(z,w)v(w)dw\right)^p\\
		&\le\left(\int_\Om|\B_0(z,w)||v(w)|^p|r(w)|^{\eta p/q}dw\right)\left(\int_\Om|\B_0(z,w)||r(w)|^{-\eta}dw\right)^{p/q}\\
		&\les\left(\int_\Om|\B_0(z,w)||v(w)|^p|r(w)|^{\eta p/q}dw\right) |r(z)|^{-\eta p/q}.
		\end{align*}
		
		Therefore, 
		\begin{align*}
		\no{B_0v}_p^p&\les\int_\Om\int_\Om|\B_0(z,w)||v(w)|^p|r(w)|^{\eta p/q}|r(z)|^{-\eta p/q}\,dw\,dz.\\
		&\les \int_\Om I_{\eta p/q}(w)|v(w)|^p|r(w)|^{\eta p/q}dw \\
		&\les\int_\Om |v(w)|^pdw=\no{v}^p_p
		\end{align*}
		if $0 < \eta <q/p$. This completes the proof of this lemma.
	\end{proof}
	
	
	\subsection{Local H\"older estimates}
	We consider the classical H\"older spaces. 
	
	\begin{definition} \label{defn:Holder}
		The space $\Lambda_s(\Om)$ is defined by:
		\begin{enumerate}[1.]
			\item For $0<s<1$, 
			\[
			\Lambda_s(\Omega)=\left\{u : \no{u}_{\Lambda_s}:=\no{u}_{L^\infty}+\sup_{z,z+h\in\Omega}\frac{|u(z+h)-u(z)|}{|h|^\alpha}<\infty\right\}.
			\]
			\item For $s>1$ and non-integer,  
			\[
			\Lambda_s(\Om)=\left\{u: \no{u}_{\Lambda_s}:= \no{D^\alpha u}_{\Lambda_{s-[s]}}<\infty, \T{ for all $\alpha$ such that} ~ |\alpha|\le [s]\right\}.
			\]
			Here $[s]$ is the greatest integer less than $s$. 
			\item For $s=1$, 
			\[
			\Lambda_1(\Om)=\left\{u : \no{u}_{\Lambda_1}:=\no{u}_{L^\infty}+\sup_{z,z+h,z-h\in\Omega}\frac{|u(z+h)+u(z-h)-2u(z)|}{|h|}<\infty\right\}.
			\]
			\item For $s>1$ and integer,  
			\[
			\Lambda_s(\Om)=\left\{u: \no{u}_{\Lambda_s}:= \max_{0 \leq |\alpha| \leq [s]}\no{D^\alpha u}_{\Lambda_{1}}<\infty, \T{for all $\alpha$ such that} ~ |\alpha|\le s-1\right\}.
			\]
		\end{enumerate}
	\end{definition}
	
	From \cite[\S3]{McSt94}, we have the following equivalent formulation of the H\"older spaces.
	\begin{proposition}\label{pro4.4} Let $s>0$. A function $u\in \Lambda_s$ if and only if for every $k\in\mathbb N$ with $k>s$,  there are functions $u_k$ so that $u = \sum_{k=1}^\infty u_k$ and 
		\begin{enumerate}[(i)]
			\item $\|u_k\|_{L^\infty(\Omega)} \les 2^{-ks}\no{u}_{\Lambda_s}$
			\item $\|D^m u_k\|_{L^\infty(\Omega)} \les 2^{mk}2^{-ks}\no{u}_{\Lambda_s}$.
		\end{enumerate}
		The existence of $\{u_k\}$ is equivalent to the decomposition $u = g_k+b_k$ where
		\begin{enumerate}
			\item $\no{b_k}_{L^\infty(\Om)}\les 2^{-ks}\no{u}_{\Lambda_s}$
			\item $\no{D^j g_k}_{L^\infty(\Om)}\les 2^{k(j-s)}\no{u}_{\Lambda_s}$, for $j\le m$.
		\end{enumerate}
	\end{proposition}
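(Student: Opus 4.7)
The plan is to build the $u_k$ by dyadic mollification. Since $\Om$ is a smooth bounded domain, Stein's extension operator produces $\tilde u \in \Lambda_s(\C^n)$ with $\no{\tilde u}_{\Lambda_s(\C^n)} \les \no{u}_{\Lambda_s(\Om)}$. Fix $\varphi \in C^\infty_c(B(0,1))$ with $\int \varphi = 1$, set $\varphi_k(y) = 2^{2nk}\varphi(2^k y)$, and define the smoothing $S_k u = \varphi_k \ast \tilde u$. Take $u_0 = S_0 u$ and $u_k = S_k u - S_{k-1} u$ for $k \ge 1$, so that $u = \sum_{k \ge 0} u_k$ pointwise on $\Om$ by the telescoping identity and the uniform convergence $S_k u \to u$.

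For the forward direction with $0 < s < 1$, use the cancellation $\int(\varphi_k - \varphi_{k-1})\,dy = 0$ to write
\[
u_k(x) = \int(\varphi_k - \varphi_{k-1})(x - y)\bigl(\tilde u(y) - \tilde u(x)\bigr)\, dy.
\]
Since the kernel is supported where $|x-y| \les 2^{-k}$ and $|\tilde u(y) - \tilde u(x)| \les 2^{-ks}\no{u}_{\Lambda_s}$ there, bound (i) follows. For (ii), transfer $m$ derivatives onto $\varphi_k - \varphi_{k-1}$, use $\no{D^m(\varphi_k - \varphi_{k-1})}_{L^1} \les 2^{mk}$, and rerun the estimate. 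For $s \ge 1$, I would subtract a Taylor polynomial of order $\lfloor s\rfloor$ at $x$ from $\tilde u(y)$; the vanishing-moment identities for $\varphi_k - \varphi_{k-1}$ kill the polynomial, and the Zygmund second-difference hypothesis handles the critical integer case $s = 1$.

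For the reverse direction, given $u = \sum u_k$ with estimates (i) and (ii), I check the H\"older norm directly. For $|h| \sim 2^{-k_0}$ split
\[
u(x + h) - u(x) = \sum_{k \le k_0}\bigl(u_k(x+h) - u_k(x)\bigr) + \sum_{k > k_0}\bigl(u_k(x+h) - u_k(x)\bigr).
\]
The first sum is controlled by $|h|\sum_{k \le k_0} 2^{k(1-s)} \les 2^{-k_0 s}$ via the mean value theorem and (ii) with $m=1$; the second by $2\sum_{k > k_0} 2^{-ks} \les 2^{-k_0 s}$ via (i). Both are comparable to $|h|^s \no{u}_{\Lambda_s}$, giving $u \in \Lambda_s$. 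The integer case $s = 1$ uses the second-difference definition of $\Lambda_1$ with (ii) for $m = 2$; higher $s$ reduces to the previous cases by differentiating term-by-term, the convergence of which is guaranteed by (ii).

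The equivalence between $\{u_k\}$ and $\{g_k, b_k\}$ is bookkeeping: given $\{u_k\}$, set $g_K = \sum_{k \le K} u_k$ and $b_K = \sum_{k > K} u_k$, then sum geometric series against (i) for $b_K$ and against (ii) for $g_K$; conversely, given $\{g_k, b_k\}$, take $u_k = g_k - g_{k-1}$ and derive (i), (ii) from the two bounds on $b_k$ and $D^j g_k$. The main obstacle is the integer-$s$ case, where the Zygmund-class definition requires higher-order cancellation in the mollifier differences and more care in the Taylor expansion step; since the statement is lifted from McNeal--Stein \cite{McNSt97}, I would simply invoke their computation for this delicate piece rather than reproduce it in full.
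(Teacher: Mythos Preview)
Your proposal is correct and follows essentially the same route as the paper: both reduce to $\R^n$ via Stein's extension theorem, build the $u_k$ by dyadic convolution operators (you use mollifier differences $\varphi_k - \varphi_{k-1}$, the paper simply cites Stein's Littlewood--Paley decomposition in \cite[\S VI.5]{Ste93}), and handle the $\{u_k\}\leftrightarrow\{g_k,b_k\}$ equivalence by the same telescoping bookkeeping $u_k = g_k - g_{k+1} = b_{k+1} - b_k$. The paper's own proof is in fact just a citation to Stein plus that bookkeeping paragraph, so your version is strictly more explicit but not materially different.
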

	
	\begin{proof}In the case that $\Omega = \R^d$ for some $d\in\N$, Stein proves the equivalence of $u\in \Lambda_s$ with properties
		$(i)$ and $(ii)$ holding as a consequence of the pseudodifferential calculus \cite[\S VI.5]{Ste93}. Essentially,
		$u$ is decomposed into $\sum u_k$ using the standard dyadic difference operators. When $\Omega\subset\R^d$, McNeal and Stein point out that the extension theorems in Stein \cite[Chapter VI]{Ste70s} allow us to pass from
		$\Omega$ to $\R^n$.
		
		The equivalence of $(i)$ and $(ii)$ with $(1)$ and $(2)$ is straightforward. Given $u = \sum_{\ell=1}^\infty u_k$, take $b_k = \sum_{\ell=k}^\infty u_k$ and $g_k = \sum_{\ell=1}^{k-1} u_k$. Conversely, given $u = g_k+b_k$, observe that
		$g_k-g_{k+1} = b_{k+1}-b_k$. Consequently, if we take $u_k = g_k-g_{k+1}$, then $u_k$ satisfies the desired estimates.
	\end{proof}
	
	The following proposition is essentially due to Hardy and Littlewood \cite{McSt94}.
	\begin{proposition}\label{HL}Let $s>0$. If $u\in C^\infty(\Om)\cap L^\infty(\Om)$ satisfies
		\[
		\quad |\nabla^m u(z)|\le A|r(z)|^{-(m-s)}~~\T{~~for every ~~} z\in \Om
		\]
		for every $m>s$, then $u\in \Lambda_s(\Om)$ and $\no{u}_{\Lambda_s(\Om)}\les A + \|u\|_{L^\infty(\Omega)}$.
	\end{proposition}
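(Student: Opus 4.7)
The plan is to reduce to the two base cases $s \in (0,1)$ and $s = 1$, then handle each by a direct path-integral argument exploiting the given derivative growth near $\bd\Om$.

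\textbf{Step 1: reduction to small $s$.} If $u$ satisfies the hypothesis with exponent $s$ and $\beta$ is a multi-index with $|\beta| < s$, then $D^\beta u$ satisfies the same hypothesis with exponent $s - |\beta|$, since
\[
|\nabla^m (D^\beta u)(z)| = |\nabla^{m+|\beta|} u(z)| \leq A|r(z)|^{-((m+|\beta|) - s)} = A|r(z)|^{-(m - (s-|\beta|))}
\]
for every $m > s - |\beta|$. In view of Definition~\ref{defn:Holder}, the general statement therefore follows from the base cases $0 < s < 1$ (applied to $D^\beta u$ with $|\beta| = [s]$) and $s = 1$ (applied to $D^\beta u$ with $|\beta| = s - 1$, when $s$ is an integer).

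\textbf{Step 2: base case $0 < s < 1$.} Fix $z, z+h \in \Om$, set $\delta = |h|$, and assume $\delta$ is small (otherwise $|u(z+h) - u(z)| \leq 2\|u\|_{L^\infty}$ already dominates $\delta^s$). In local coordinates at the nearest boundary point of $z$, choose a unit vector $\nu$ pointing into $\Om$ and define $\tilde z = z + \delta\nu$ and $\tilde z' = (z+h) + \delta\nu$. Decompose $u(z+h) - u(z)$ along the broken path $z \to \tilde z \to \tilde z' \to z+h$: on the outer segments $[z,\tilde z]$ and $[z+h,\tilde z']$ the distance to $\bd\Om$ at parameter $t$ is $\gtrsim t$, while on the middle segment $[\tilde z, \tilde z']$ it is $\gtrsim \delta$. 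The hypothesis with $m=1$ gives $|\nabla u| \leq A|r|^{-(1-s)}$, so integrating yields
\[
|u(z) - u(\tilde z)| \lesssim A \int_0^\delta t^{-(1-s)}\, dt \lesssim A\delta^s, \qquad |u(\tilde z) - u(\tilde z')| \lesssim A\delta \cdot \delta^{-(1-s)} = A\delta^s,
\]
with $|u(\tilde z') - u(z+h)| \lesssim A\delta^s$ treated analogously. Summing and combining with the trivial $\|u\|_{L^\infty}$ bound gives the stated $\Lambda_s$ estimate.

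\textbf{Step 3: Zygmund case $s = 1$.} Here one bounds the symmetric second difference $\Delta_h^2 u(z) := u(z+h) + u(z-h) - 2u(z)$ by $A|h|$. When $|r(z)| \geq 4|h|$, Taylor's formula
\[
\Delta_h^2 u(z) = \int_0^1 (1-t)\, h^T\bigl[(Hu)(z+th) + (Hu)(z-th)\bigr] h\, dt
\]
combined with $|Hu| \leq A|r|^{-1} \lesssim A|h|^{-1}$ on the integration path gives the required bound directly. When $|r(z)| < 4|h|$, shift inward to $\tilde z = z + C|h|\nu$ with $C$ large enough that $|r(\tilde z)| \gtrsim |h|$, apply the interior estimate to $\Delta_h^2 u(\tilde z)$, and control the error $\Delta_h^2 u(z) - \Delta_h^2 u(\tilde z)$ by writing it as an integral along $[z, \tilde z]$ of the symmetric second difference $\Delta_h^2(\nabla u)$, then employing a dyadic telescoping with intermediate points $w_k = z + 2^{-k} C|h|\nu$: at scale $k$ the distance to $\bd\Om$ is $\sim 2^{-k}|h|$, so the hypothesis $|\nabla^3 u| \leq A|r|^{-2}$ gives a bound at level $k$ that sums geometrically over $k$.

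\textbf{Main obstacle.} The delicate point is the near-boundary subcase of Step 3: the Hessian bound $|Hu| \lesssim |r|^{-1}$ is only borderline integrable, so any naive path argument incurs a logarithmic divergence. Closing the estimate requires using the full strength of the hypothesis---bounds on $|\nabla^m u|$ for \emph{all} $m \geq 2$, not just $m = 2$---to extract the geometric summability sketched above. Equivalently, one may construct the dyadic decomposition $u = \sum u_k$ of Proposition~\ref{pro4.4} directly via inward-shifted mollification with a moment-vanishing kernel, which also trades the logarithm for a geometric series and yields the sharp control $\|u_k\|_{L^\infty} \lesssim 2^{-k}$.
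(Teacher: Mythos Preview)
The paper does not supply its own proof; it attributes the result to Hardy and Littlewood and directs the reader to Nagel--Stein \cite[Chapter III, Proposition 3]{NaSt79}. Your sketch follows the classical argument one finds there, so there is no substantive comparison to make.

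One gap is worth noting. Your Step~1 reduction applies the base case to $D^\beta u$, which tacitly assumes $D^\beta u\in L^\infty(\Omega)$ for $0<|\beta|\le[s]$; but this is part of the \emph{conclusion} (it is contained in $u\in\Lambda_s$), not a hypothesis, and the given bounds on $|\nabla^m u|$ hold only for $m>s$. The missing $L^\infty$ control is recovered by a downward induction on $|\beta|$: for $|\beta|=[s]$, integrate $|\nabla^{[s]+1}u|\le A|r|^{-([s]+1-s)}$ along the inward normal (the exponent exceeds $-1$, so the integral converges) and combine with an interior interpolation against $\|u\|_{L^\infty}$ at points where $|r|\sim 1$; then feed the resulting bound into the step $|\beta|=[s]-1$, and so on. Your Step~3 correctly isolates the genuine difficulty in the Zygmund case and names the standard remedies (dyadic telescoping or inward mollification), so the sketch is sound in outline.
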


	\begin{proof}[Proof of Theorem~\ref{main2} for local H\"older estimates.]
		Our goal is to establish the estimate
		\begin{eqnarray}\label{holder}
		\no{\chi_0 Bv}_{\Lambda_s(\Om)}\les \no{\chi_1 v}_{\Lambda_s(\Om)}+\no{v}_{L^\infty(\Om)}.
		\end{eqnarray}
		Let $m=[s]+1$.  An application of Proposition \ref{HL} reduces the proof of \eqref{holder} to showing 
		\[
		|\nabla^m \chi_0 Bv(z)|\les |r(z)|^{-(m-s)}\left(\no{\chi_1 v}_{\Lambda_s(\Om)}+\no{v}_{L^\infty(\Om)}\right).
		\]
		We let $\{\zeta_j\}_{j=0}^m$ and $\psi_\ep(z,w)$ be as Section \ref{subsec:local L^p}
		and choose $\epsilon$ sufficiently small such that 
		\[
		\zeta_0=1 \quad \T{on} \quad \bigcup_{z\in \supp (\zeta_j)}\supp(\psi_\epsilon(z,\cdot)),\quad \T{for } 1\le j\le m.
		\]
		Then similarly to \eqref{4.2}, by applying Theorem \ref{gbkernel}
		\begin{align*}
		|\nabla^m\zeta_m Bv(z)|&\les \sum_{|\alpha|=m} |\zeta_m D^\alpha Bv(z)|+|\nabla^{m-1}\zeta_{m-1}Bv(z)| \\
		&\les \sum_{|\alpha|=m} \Big|\int_\Om \zeta_m(z)D^\alpha_z\B(z,w)\psi_\epsilon(z,w)v(w)dw\Big|+\int_\Om |v(w)|dw+|\nabla^{m-1}\zeta_{m-1}Bv(z)| \\
		&\les \sum_{|\alpha|=m} |B_\epsilon^\alpha v(z)|+|\nabla^{m-1}\zeta_{m-1}Bv(z)|+\no{v}_{L^\infty}.
		\end{align*}
		
		To estimate $|B_\epsilon^\alpha v(z)|$, we use the following lemmas.
		\begin{lemma}\label{supnormK}For every $z\in \Om$ and multiindex $\alpha$ of length $m$, we have 
			$$|B_\epsilon^\alpha v(z)|\les |r(z)|^{-m}\no{\zeta_0v}_{L^\infty(\Om)}.$$
		\end{lemma}
		\begin{proof} It follows from the definition of $B_\epsilon^\alpha$, the fact that $\zeta_0 \equiv 1$ on $\supp \zeta_m$, and \eqref{supp}  that
			\[
			|B_\epsilon^\alpha v(z)|\les \no{\zeta_0v}_{L^\infty(\Om)}\int_{\Om }\zeta_0(z)|D^\alpha_z\B(z,w)|\zeta_0(w)\, dw, \quad\T{for $z\in \Om$}.
			\]
			
			Since $z,w\in \supp(\zeta_0)\subset U$, Theorem~\ref{main1} yields
			\begin{align*}
			|D^\alpha_z\B(z,w)|\les& (\delta_{NI}(z,w))^{-m-2}\prod _{j=2}^n\phi_j(\delta_{NI}(z,w))^{-2}\\
			&\les |r(z)|^{-m+\eta}|r(w)|^{-\eta}  (\delta_{NI}(z,w))^{-2}\prod _{j=2}^n\phi_j(\delta_{NI}(z,w))^{-2}
			\end{align*}
			for $z,w\in \Om\cap U$, where $0<\eta<1$.
			Thus, 
			\begin{align*}
			&\int_{\Om}\zeta_0(z)|D^\alpha_z\B(z,w)|\zeta_0(w)dw\les |r(z)|^{-m+\eta}I_\eta(z)\les |r(z)|^{-m}, \quad \T{for $z\in \Om$}.
			\end{align*}
			Here the last inequality follows the estimate of $I_\eta$ in the proof of Lemma~\ref{lmB0}.
		\end{proof}
		
		\begin{lemma}\label{supnormB}
			For every $z\in \Om$ and multiindex $\alpha$ of length $m$, we have 
			$$|B_\epsilon^\alpha v(z)|\les |r(z)|^{-1}\sum_{j=0}^{m-1}\no{D^{j}\zeta_0v}_{L^\infty(\Om)}.$$
		\end{lemma}
		\begin{proof}
			By
				repeating the argument of Lemma~\ref{lmBm} and the estimate leading to (\ref{eqn:IBP est of varB})
				 but integrating by parts only $(m-1)$-times, we are led to the inequality
			$$|B_\epsilon^\alpha  v(z)|\les \sum_{j=0}^{m-1}\no{D^{j}\zeta_0v}_{L^\infty(\Om)}\int_{\Om}\frac{\zeta_0(z)\zeta_0(w)dw}{(\delta_{NI}(z,w))^{3}\prod_{j=2}^n(\phi_j(\delta_{NI}(z,w)))^{2}}.$$
			Also, the estimate of $I_\eta$ with $\eta=0$ in Lemma \ref{lmB0} 
			immediately yields 
			\[
			\int_{\Om }\frac{\zeta_0(z)\zeta_0(w)\,dw}{(\delta_{NI}(z,w))^{3}\prod_{j=2}^n(\phi_j(\delta_{NI}(z,w)))^{2}}\les |r(z)|^{-1}.
			\]
		\end{proof}

		We now return to the proof of Theorem \ref{main2}. Choose $k$ such that $2^{-k}\approx |r(z)|$. Since $\zeta_0v\in \Lambda^s(\Om)$, by Proposition~\ref{pro4.4} there exists $g_k$ and $b_k$ such that
		$$\zeta_0v=g_k+b_k, \quad \T{on}\quad \Om,$$
		where $$\no{b_k}_{L^\infty(\Om)}\les 2^{-ks}\no{\zeta_0 v}_{\Lambda^s(\Om)}=|r(z)|^s\no{\zeta_0 v}_{\Lambda^s(\Om)}$$ and  
		$$\no{D^j g_k}_{L^\infty(\Om)}\les 2^{k(j-s)}\no{\zeta_0 v}_{\Lambda^s(\Om)}=|r(z)|^{-(j-s)}\no{\zeta_0 v}_{\Lambda^s(\Om)}, \quad \T{for~~}j\le m.$$
		Then 
		\begin{align*}
		|B_\epsilon^\alpha v(z)|&\le |B^\alpha_\epsilon \zeta_0^{-1}b_k(z)|+|B^\alpha_\epsilon \zeta_0^{-1}g_k(z)| \\
		&\les |r(z)|^{-m}\no{b_k}_{L^\infty}+|r(z)|^{-1}\sum_{j=0}^{m-1}\no{D^{j}g_k}_{L^\infty}\\
		&\les \no{\zeta_0 v}_{\Lambda_s}\left(|r(z)|^{-m}|r(z)|^{s}+|r(z)|^{-1}\sum^{m-1}_{j=0}|r(z)|^{-(j-s)}\right)\\
		&\les \no{\zeta_0 v}_{\Lambda_s}|r(z)|^{-(m-s)}.
		\end{align*}
		An application of Proposition~\ref{HL} completes the proof.
	\end{proof}
	
	%
	%
	\section{Proof of Theorem \ref{cor1}} \label{sec:Proof of the {Theorem}}
	Our main theorem in this subsection is
	\begin{theorem}\label{h-extendible thm}
The boundary of an bounded $h$-extendible domain is  a set of good anisotropic dilation points.
	\end{theorem}
The proof of this theorem is divided in following four lemmas. In Lemma~\ref{lm1} we prove the condition (1) in Definition \ref{defn:good dilation}.
The proof of the  condition (2) in Definition \ref{defn:good dilation} is divided into Lemma~\ref{lm_n2}, Lemma~\ref{lm_n3} and Lemma~\ref{lm_n4}\\

Throughout this section, $U_o$ is a neighborhood of the origin and $\Om$ is a bounded domain with smooth boundary $b\Om$ in which every boundary point is $h$-extendible.  As discussed in Yu \cite{Yu94, Yu95}, $p\in b\Om$ is $h$-extendible if  there is a  multitype
 $\M(p)=(m_{p,1},m_{p,2},\cdots, m_{p,n})$ with $m_{p,1}=1$, a neighborhood $U_p$ of $p$, a defining function $r_p$ defined in $U_p$, a biholomorphism $H_p: U_p  \to U_o$, 
 (that is, local coordinates associated to $p$) so that $H_p(p)=0$ and $r_{p,1}(z) := r_p(H_p^{-1}(z))$ has the expansion
\begin{equation}\label{eq:n0}
r_{p,1}(z):= \Re z_1+P_p(z')+R_p(z) \qquad\text{for } z=(z_1,z')\in U_o
\end{equation}
where $P_p(z')$ is a $(1/m_{p,2},\dots,1/m_{p,n})$-homogeneous plurisubharmonic polynomial that contains no pluriharmonic terms and $R_q(z)=o(\sigma_p(z))$.
Here,  $$\sigma_p(z): = \sum_{j=1}^n |z_j|^{m_{p,j}}.$$ Thus, there exist 
constants $C>0$ and $\gamma_p>1$ so that the smooth function $R$ satisfies
\[
|R_p(z)| \leq C \sigma_p(z)^{\gamma_p}
\]
(see \cite[Definition 1.4]{Yu94} and the following discussion). Recall that if $f(x) = o(g(x))$ and both functions are smooth, then it follows that
$|\nabla f| = o(|\nabla g|)$.
	
We show that for small $\delta>0$, the map
\begin{equation}\label{new11}
 \Phi_{p,\delta}(z)=\left(\frac{z_1}{\delta},\frac{z_2}{\delta^{1/m_{p,2}}},\cdots, \frac{z_n}{\delta^{1/m_{p,n}}} \right)
\end{equation}
is a good anisotropic dilation at $p$. Note that the homogeneity of $P_p$ means 
\begin{equation}\label{new2}	
P_p\left(\frac{z_2}{\delta^{1/m_{p,2}}},\cdots, \frac{z_n}{\delta^{1/m_{p,n}}}\right)=\delta^{-1} P_p(z_2,\cdots,z_n)
\end{equation}
for $z'=(z_2,\cdots, z_n)\in \C^{n-1}$ and $\delta>0$.

\begin{lemma}\label{lm1}
	The dilation $\Phi_{p,\delta}$ satisfies the condition (1) in Definition   \ref{defn:good dilation}.
	\end{lemma}
\begin{proof}
 Since $|\frac{\di r_{p,1}}{\di z_1}(z)|\approx 1=\frac{\delta}{\delta}$ for $z\in U_o$, we only need to check the first condition in Definition \ref{defn:good dilation} for $j=2,\dots, n$. 
For $\delta>0$ sufficiently small, $\Phi_{p,\delta}^{-1}(B(0,1))\subset U_o$. 
Fix such a $\delta$, and suppose that $z\in \Phi_{p,\delta}^{-1}(B(0,1))$. 
Then there exists $\hat z=(\hat z_1,\hat z') \in B(0,1)$ such that $z=(z_1,z')=\Phi^{-1}_{\delta}(\hat z)$. 
Since $\hat z_j = \frac{z_j}{\delta^{1/m_{p,j}}}$, \eqref{new11} and \eqref{new2} imply that
\[
\frac{\p P_p(\hat z')}{\p\hat z_j} = \frac 1\delta \frac{\p P_p(z')}{\p z_j} \frac{\p z_j}{\p \hat z_j} = \frac{\delta^{1/m_j}}{\delta} \frac{\p P_p(z')}{\p z_j}
\]
from which it follows that $|\frac{\p P_p(z')}{\p z_j}| \les \delta^{1-1/m_{p,j}}$. Since $\gamma>1$, it follows that  

$$\Big|\frac{\di R_p}{\di z_j}(z)\Big|= \Big|\frac{\di }{\di z_j}\left(o(\sigma_p(\Phi^{-1}_{p,\delta}(\hat z)))\right)\Big|\lesssim o(\delta^{1-1/m_{p,j}}).$$
Therefore
$$\Big|\frac{\di r_p}{\di z_j}(z)\Big| \le \Big|\frac{\di P_p }{\di z_j}(z')\Big|+\Big|\frac{\di R_p}{\di z_j}(z)\Big| \lesssim \delta^{1-1/m_{p,j}}$$
for $z\in \Phi_{p,\delta}^{-1}(B(0,1))$. 	We have now established the condition (1) in Definition \ref{defn:good dilation}.
\end{proof}
Denote $E_{p,\delta}=\{z\in \C^n: \sigma_p(z)<\delta\}$  the ellipsoid associated with the multitype $\mathcal M(p)$ with radius $\delta$ and centered at the origin.	Let $q\in H_p^{-1}(E_{p,\delta})\cap b\Om$ and 
\begin{equation}\label{gamma}
\gamma=\min\{\gamma_q: q\in H_p^{-1}(\overline{E}_{p,\delta})\cap b\Om \}.
\end{equation}
Let 
$$\Psi_{q\to p,\delta}=  \Phi_{p,\delta}H_pH_q^{-1} \Phi^{-1}_{q,\delta}.$$
The key point of the second condition in Definition \ref{defn:good dilation} is in the following lemma.

\begin{lemma}\label{lm_n2} For every $t>0$ sufficiently small, there exist positive constants $C$ and $\delta(t)$ such that 
	\begin{equation}\label{det_Jac}
	 |\det J\Psi_{q\to p,\delta}\big|_{(-t,0')}|\le C
	\end{equation}
holds uniformly for  $0<\delta\le \delta(t)$.
\end{lemma}\label{thm_n2}
The proof of this lemma is inspired by the proof of  the main theorem in \cite{Niko02} (See Theorem 6.5 below).
 \begin{proof}[Proof of Lemma \ref{lm_n2}] 
  Recall that $H_p$ is a biholomorphism from $U_p$ to $U_o$, with $H_p(p)=0$, so it can be extended to be a $C^\infty$ diffeomorphism from $\C^n$ to $\C^n$. 
 Define $\Om_{p,\delta}:=\Phi_{p,\delta}H_p(\Om)$ and
$$r_{p,\delta} (z):=\frac{1}{\delta} r_{p,1}(\Phi^{-1}_{p,\delta}(z))=\Re z_1+P_p( z')+O\big(\delta^{\gamma_p-1} \sigma_p(z)^{\gamma_p}\big), 
$$
for $z\in \Phi_{p,\delta}(U_o)$ and $\gamma_p>1$.
Thus, $r_{p,\delta}(z)$ is a defining function for $\Om_{p,\delta}$ in $\Phi_{p,\delta}(U_o)$.
When $\delta\to 0$, 
$$\Om_{p,\delta}\to \Om_{p,0}:=\{z\in \C^n: r_{p,0}(z):=\Re z_1+P_p(z')<0\},$$
where $\Om_{p,0}$ is an associated model for $\Om$ at $p$. It is obvious that $ \Om_{p,1}\cap U_o \equiv H_p(\Om)\cap U_o$. 
For $\alpha, \beta>0$, we define perturbations of  $\Om_{p,0}$  and  $\Om_{p,1}$ by
$$\Om_{p,0}^{\alpha}=\{z\in \C^n:r_{p,0}^{-\alpha}(z):=r_{p,0}(z)-\alpha a_p(z)<0\}$$
where $a_p$ is the bumping function from Yu \cite[Definition 3.3]{Yu95} so that $\Om_{p,0}^\alpha$ is pseudoconvex, and 
$$\Om_{p,1}^{\beta}=\{z\in \C^n:r_{p,1}^{+\beta}(z):=r_{p,1}(z)+\beta<0\}.$$

Let 
$$\Theta_{p}^{\delta, q}:= \Psi_{q\to p,\delta}\Psi_{p\to q,1}=\Phi_{p,\delta}H_pH_q^{-1} \Phi^{-1}_{q,\delta}H_qH_p^{-1}.$$ 
Then $\Theta_{p}^{\delta, q}$ is a biholomorphism from  $U_o$  to its map $ \Theta_{p}^{\delta, q}(U_o)$ since we may choose $U_p$ and $U_q$ such that
$H_q$ is holomorphic on $U_p$ and $H_p$ is holomorphic on $U_q$.

The proof of \eqref{det_Jac} is divided into three steps. \\
 - Step 1. We construct the open set $X$ and $Y$ such that   $\{\Theta_{p}^{\delta, q}\}\in \text{Hol}(X,Y)$ and $Y$ is a taut manifold. \\
 - Step 2. Since $Y$ is a taut manifold, every subsequence of $\{\Theta_{p}^{\delta, q}\}$ either converges normally or diverges compactly. In this step, we prove it is NOT compact divergence. \\
 - Step 3. Using the conclusion in Step 2,  we prove that \eqref{det_Jac} holds.
 
 \medskip
 
 \n\textbf{Proof of Step 1}. In this step we prove that 
for sufficiently small $\alpha, \beta>0$ there exists $\delta_0=\delta(\alpha,\beta)$ such that if $X:= \Om_{p,1}^{\beta}\cap B(0,\beta^{\frac{1}{\gamma}})$ with $\gamma$ as in \eqref{gamma}
and $Y:=\Om_{p,0}^{\alpha}$ then $\Theta_p^{\delta,q}\in \text{Hol}(X,Y)$ for $q\in H^{-1}_p(E_{p,\delta})$ and $0<\delta\le \delta_0$. 

First,  we fix $z_{p,1}\in X$. Then
\begin{equation}\label{k200}
\begin{cases}
|z_{p,1}|\le \beta^{\frac{1}{\gamma}},\\
r_{p,1}(z_{p,1})+\beta<0.
\end{cases}
\end{equation}
Let $z_{q,1}:=H_qH_p^{-1}(z_{p,1})$. 
We have 
\begin{align*}
|z_{q,1}|
\le& \Big|H_qH_p^{-1}(z_{p,1})-H_qH_p^{-1}(0)\Big|+\Big|H_qH_p^{-1}(0)-H_qH_p^{-1}H_p(q)\Big|\quad\text{(since $H_qH_p^{-1}H_p(q)=H_q(q)=0$)}\\
\le& c\left(|z_{p,1}|+\Big|H_p(q)\Big|\right)\\
\le& c\left(|\beta|^{\frac{1}{\gamma}}+\delta^{\frac{1}{m_{p,n}}}\right)
\end{align*} 
where  the last inequality follows by the first inequality of  \eqref{k200} and  the inclusion $H_p(q)\in E_{p,\delta}$. Thus there exist $\delta(\beta)>0$ such that for every $0\le \delta\le \delta(\beta)$, one has
 $|z_{q,1}|\le c\beta^{1/\gamma}$, and hence, 
 $$H_qH_p^{-1}(B(0,\beta^{1/\gamma}))\subset B(0,c\beta^{1/\gamma}).$$
 By our definitions $z_{q,1}=H_qH_p^{-1}(z_{p,1})$ and $r_p(z)\approx r_q(z)$ for $z\in U_o$,  it follows 
 	$$r_{p,1}(z_{p,1})=r_p(H_p^{-1}(z_{p,1}))=r_p(H_q^{-1}(z_{q,1}))\approx r_q(H_q^{-1}(z_{q,1})) = r_{q,1}(z_{q,1})$$ 
Thus 
 $$r_{q,1}(z_{q,1})+c\beta \le c(r_{p,1}(z_{p,1})+\beta).$$
 On the other hand, $\gamma>1$ and $0 \leq \delta < 1$ so
 \begin{align*}
 r_{q,\delta}(z_{q,1})&\leq r_{q,1}(z_{q,1})+|O(\delta^{\gamma-1} \sigma_p(z_{q,1})^\gamma)-O(\sigma_p(z_{q,1})^\gamma)|\\
 &\leq r_{q,1}(z_{q,1})+c\sigma_q^{\gamma}(z_{q,1})\\
  &\leq r_{q,1}(z_{q,1})+c|z_{q,1}|^{\gamma}\\
   &\leq r_{q,1}(z_{q,1})+c\beta.\\
 \end{align*}
 Thus, $$ r_{q,\delta}(z_{q,1})\le c(r_{p,1}(z_{p,1})+\beta).$$ 
 It follows 
  $$H_qH_p^{-1}(\Om_{p,1}^{\beta})\cap B(0,c\beta^{1/\gamma})\subset \Om_{q,\delta}\cap B(0,c\beta^{1/\gamma}).$$
 Therefore, we have 
 $$H_qH_p^{-1}(X)\subset \Om_{q,\delta}\cap B(0,c\beta^{1/\gamma}),$$
and
 $$\Phi_{q,\delta}H_qH_p^{-1}(X)\subset \Om_{q,1}\cap E_{q,c\delta\beta^{1/\gamma}}\subset \Om_{q,1}\cap E_{q,\delta}$$
by requiring $\beta$ to be small enough to satisfy $c\beta^{1/\gamma}\le 1$.
Thus, 
$$H_pH_q^{-1}\Phi_{q,\delta}H_qH_p^{-1}(X)\subset \Om_{p,1}\cap H_pH_q^{-1}(E_{q,\delta})\subset \Om_{p,1}\cap A_\delta$$
where $$A_\delta:= \underset{q\in H_{p}^{-1}(E_{p,\delta})}{\bigcup} H_pH_q^{-1}(E_{q,\delta})$$
It is easy to see that   $A_{\delta}$ tends to the origin as $\delta\to 0$. 
Thus, for every $\alpha>0$, there exists $\delta(\alpha)$ such that 
$$A_\delta\subset \Big\{z\in U_o : \big|O(\sigma_p(z)^\gamma)\big|\le \alpha \sigma_p(z)\Big\},\qquad \text {for $0<\delta \le \delta(\alpha)$}.$$
This implies 
$r_{p,0}^{-\alpha}(z)\le r_{p,1}(z)$ for $z\in A_\delta$
and hence
\begin{equation}\label{eqn:set_inequalities}
H_pH_q^{-1}\Phi_{q,\delta}H_qH_p^{-1}(X)\subset \Om_{p,1}\cap  A_\delta  \subset \Om_{p,0}^{-\alpha}\cap A_\delta\subset \Om_{p,0}^{-\alpha}
\end{equation}
 Since $\Phi_{p,\delta}$ in an automorphism of  $\Om^{-\alpha}_{p,0}$, we obtain 
$$\Theta_{p}^{\delta,q}(X)= \Phi_{p,\delta}H_pH_q^{-1}\Phi_{q,\delta}H_qH_p^{-1}(X) \subset \Phi_{p,\delta}(\Om^{-\alpha}_{p,0})= \Om^{-\alpha}_{p,0}=Y$$ 
for $0<\delta\le \delta(\alpha, \beta)$.\\

 \n\textbf{Proof of Step 2.}
The family $\{\Theta_{p}^{\delta,q}\}_{\delta\in (0,\delta_0], q\in H^{-1}_p(E_{p,\delta})\cap b\Om}\subset H(X,Y)$ is a normal  family since $Y$ is a taut complex manifold by Theorem \ref{thm:h extendible is taut}. 
A consequence of tautness is that every subsequence of $\{\Theta_{p}^{\delta,q}\}_{\delta\in (0,\delta_0], q\in H^{-1}_p(E_{p,\delta})\cap b\Om}$ either converges normally or diverges compactly. For $t\in (0,+\infty)$, let $x_{in}= H_pH_q^{-1}(-t,0')$ and $y_{out} = \Psi_{q\to p,\delta}(-t,0')$. Then 
$$y_{out}= \Psi_{q\to p,\delta}H_qH_p^{-1}(x_{in})=\Theta_{p}^{\delta,q}(x_{in}).$$ 
We will show that compact divergence fails by establishing the existence of $t$ and $c$ that are independent of $\delta$ and such that $x_{in}\subset  X$ and $|y_{out}|\le M$. 
We have 
$$|x_{in}|\le c_1( |t|+\delta^{1/m_{p,n}})$$
and 
$$r_{p,1}(x_{in})\le c_2 r_{q,1}(-t,0)=-c_2 t.$$ 
For $c_1\delta^{1/m_{p,n}}\le \frac{1}{2}$, in order to force  $x_{in}\in X$, we need 
\begin{equation}\label{n202}
\frac{\beta}{c_2}<t< \frac{1}{2c_1}\beta^{1/\gamma}.
\end{equation}
 Since $\gamma>1$, for $\beta<\beta_0=(\frac{c_2}{2c_1})^{1/(\gamma-1)}$, we chose $t$ in the non empty set $(\frac{\beta}{c_2}, \frac{1}{2c_1}\beta^{1/\gamma})$.\\

On the other hand, 
 $$\Psi_{q\to p, \delta}(-t,0')=\Phi_{p,\delta}H_{p}H^{-1}_{q}(-\delta t, 0').$$
 Here the equality follows by $\Phi_{q,\delta}(-t,0')=(-\delta t,0)$. Thus the length $$\Big|H_pH_q^{-1}(-\delta t, 0')-H_p(q)\Big|=\Big|H_pH_q^{-1}(-\delta t, 0')-H_pH_q^{-1}(0)\Big|\le c\delta t.$$
 By the hypothesis $q\in H_p^{-1}(E_{p,\delta})$,  it follows 
 $$H_{p}H^{-1}_{q}(-\delta t, 0'))\in E_{p,\delta(1+ct)}$$
 and hence 
 $$\Phi_{p,\delta}H_{p}H^{-1}_{q}(-\delta t, 0')\in B(0,1+ct)$$
 for some $c$. Thus, 	$\Theta_{p}^{\delta,q}(x_{in})\in \Om_{p,0}^{-\alpha}\cap B(0,M)$ with $M$ independent of $\delta$. This means 
no subsequence of the family $\Theta_{p}^{\delta,q}$ is  compactly divergent. Therefore, it converges uniformly on a compact subsets  of $X$\\
 
 \n\textbf{Proof of Step 3.} 
 Let $\{\delta_j\}_{j=0}^{\infty}\subset (0,\delta_0]$ such that be $\{\delta_j\}_{j=0}^{\infty} \searrow 0$  and $\{q_j\}_{j=0}^\infty$ be a sequence of points in $\C^n$ such that $q_j\in H_p^{-1}(E_{p,\delta})\cap b\Om$.
 Thus, $\{\Theta_{p}^{\delta_j,q_j}(z)\}_{j=0}^\infty$ is a subsequence of the family $\{\Theta_{p}^{\delta,q}\}_{\delta\in (0,\delta_0], q\in H^{-1}_p(E_{p,\delta})\cap b\Om}$. 
 As a  consequence of Step 2, when $S$ is a compact subset of $X=\Om_{p,1}^\beta\cap B(0,\beta^{1/\gamma_p})$, $\{\Theta_{p}^{\delta_j,q_j}(z)\}_{j=0}^\infty$  converges uniformly on $S$. 
  Let $$\Theta_p(z)=\lim_{j\to \infty}\Theta_{p}^{\delta_j,q_j}(z), \quad z\in S.$$ 
 It now follows from the uniform convergence of holomorphic functions on compact sets that $\Theta_p(z)$ is holomorphic on $S$ and
  \begin{equation}\label{n204}
  \det(J\Theta_p)=\lim_{j\to \infty}\det (J\Theta_{p}^{\delta_j,q_j})
    \end{equation}
 uniformly on $S$. 
Recall that $$\Theta_{p}^{\delta_j, q_j}= \Psi_{q_j\to p,\delta_j}\Psi_{p\to q_j,1}$$
This means 
  \begin{equation}\label{n204b}
\det\left(J\Theta_{p}^{\delta_j, q_j}\big|_z\right)=\det\Big(J\Psi_{q_j\to p,\delta_j}\big|_{\tilde z=\Psi_{p\to q_j,1}(z)} \Big)\det\Big( J\Psi_{p\to q_j,1}\big|_z\Big),
\end{equation}
 for $z\in S$.
  We notice that $\Psi_{p\to q_j,1}=H_{q_j}H_{p}^{-1}$ is a transformation of a local coordinates associated to $q_j$ to a local coordinates associated to $p$. Thus, 
    \begin{equation}\label{n204c}
  \lim_{j\to \infty} \Psi_{p\to q_j,1}=\lim_{j\to \infty }H_{q_j}H_{p}^{-1}= G,
  \end{equation}
  where $G$ is holomorphic and its Jacobian has a non-zero determinant on $U_p$ (a set that contains $S$). The reason that $G$ may not be the identity map because $H_{q_j}$ may approach another local coordinate choice associated with the $h$-extendible point $p$ since they are not unique. Combining \eqref{n204}, \eqref{n204b} and \eqref{n204c}, we obtain 
      \begin{equation}\label{n204d}
      \lim_{j\to \infty} \det\Big(J\Psi_{q_j\to p,\delta_j}\big|_{\tilde z=\Psi_{p\to q_j,1}(z)} \Big) =   \det(J\Theta_p\big|_z)\left(\det(JG\big|_z)\right)^{-1},\quad  z\in S.
    \end{equation}
   This implies there exist $N$ and $C$ independent of $j$ such that for all $j\ge N$,  
 $$ |\det(J \Psi_{q_j\to p,\delta_j})\big|_{\tilde z}|\le  C$$
 holds for $\tilde z\in \Psi_{p\to q_j,1}(S)$ and $(\delta,q)\in\{(\delta_j,q_j): j\ge N\}$. A consequence of this argument is
 the existence of $C>0$ and $\delta_0(\beta)>0$ so that if $0 < \delta \leq \delta_0(\beta)$ and $q \in H^{-1}_p(E_{p,\delta})\cap b\Om$ then      
  \begin{equation}\label{n204e}
 |\det(J \Psi_{q\to p,\delta})\big|_{z}|\le  C, \quad \text{ for }z\in \Psi_{p\to q,1}(S)
     \end{equation}
 holds. Moving forward, we assume that $\delta(\beta)$ is small enough that \eqref{n204e} holds.
 
 As in the proof of Step 2, for $0<\beta\le \beta_0$, $0<\delta\le \delta(\beta)$, and $t$ satisfying  \eqref{n202}, it follows 
 $$x_{in}:=\Psi_{q\to p,1}(-t,0')=\Psi^{-1}_{p\to q,1}(-t,0')\in X.$$ 
 So if we choose the compact set $S\subset X$ containing $x_{in}$, we obtain for $0<t<t_0$, there exist $\delta(t)>0$ such that 
    \begin{equation}\label{n204g}
 |\det(J \Psi_{q\to p,\delta})\big|_{(-t,0')}|\le  C, 
 \end{equation}
 hold uniformly in $0<\delta\le \delta(t)$.  This proves Step 3 and also Lemma~\ref{lm_n2}.
 \end{proof}
 
\begin{proof}[Proof of Theorem~\ref{max-type}] Fix $p\in S$ and let $q\in H_p^{-1}(E_{p,\delta})\cap S$. We first notice that if $\mathcal M(p)=(m_{p,1},m_{p,2},\cdots, m_{p,n})$ and $\mathcal M(q)=(m_{q,1},m_{q,2},\cdots, m_{q,n})$ are  multitypes associated to $p$ and $q$, respectively, then 
	$$\det \left(J \Phi_{p,\delta}\big|_z\right)=\delta^{\sum_{k=1}^n \frac{1}{m_{p,k}}}\quad\text{and}\quad \det \left(J \Phi_{q,\delta}\big|_z\right)=\delta^{\sum_{k=1}^n \frac{1}{m_{q,k}}}$$
	for all $z\in \C^n$. Since $\Psi_{q\to p,\delta}=\Phi_{p,\delta}\Psi_{q\to p,1}\Phi_{q,\delta}$ and $|\det\left(J\Psi_{q\to p,1}\right)|$ bounded away from zero,  by \eqref{n204g} we have 
$$	\delta^{\sum_{k=1}^n \frac{1}{m_{p,k}}-\sum_{k=1}^n \frac{1}{m_{q,k}}}\le C'$$
for some constant $C'$ for small $\delta>0$. This implies 
\begin{equation}\label{char}
	\sum_{k=1}^n \frac{1}{m_{p,k}}\le\sum_{k=1}^n \frac{1}{m_{q,k}}.
	\end{equation}
\end{proof}
\begin{remark} The inequality \eqref{char} holds for all $h$-extendible domains. For example, say $\Om$ is the decoupled domain defined by 
	$$\Om=\{z\in \C^n: r(z)=\Rre z_1+\sum_{k=2}^n |z_k|^{2m_k}\}$$
	Then $\mathcal M(0)=(1, 2m_2,\cdots, 2m_{n})$, and
	it is easy to see that for every $q$ in a neighborhood of $0$, the $k$-entry $m_{q,k}$ of $\mathcal M(q)$ is always less than or equal $2m_{k}$. 
	The inequality \eqref{char} holds.
	\end{remark}

Denote ${\mathfrak B}_{\Om_{p,\delta}}$ be the Bergman metric associated to $\Om_{p,\delta}$ and $d_{\Om_{p,\delta}}(z)$ the distance from $z$ to the boundary of $\Om_{p,\delta}$. Let 
$$\kappa_p=\max\big\{\frac{1}{m_{q, n}} : q\in H_p^{-1}(\bar E_{p,\delta})\cap b\Om\big\}.$$
Note that $\kappa$ is bounded uniformly in $\delta$. 
\begin{lemma}\label{lm_n3} The Bergman metric associated to the scaled domain $\Om_{p,\delta}$ has a uniformly lower  bound with the rate $d^{-\kappa}_{\Om_{p,\delta}}(z)$. In particular, one has 
	\begin{equation}\label{bergman_lower_bound}
	{\mathfrak B}_{\Om_{p,\delta}}(z,X) \ge c d_{\Om_{p,\delta}}^{-\kappa_p}(z)|X|
	\end{equation}
	for $z\in U_o$ and $X\in T^{1,0}\C^n\big|_{\Om_{p,\delta}}$, where $c$ is  independent of $\delta$.
\end{lemma}

\begin{proof}[Proof of Lemma \ref{lm_n3}]
Fix $z_{p,\delta}\in \Om_{p,\delta}\cap B(0,1)$ and $X_{p,\delta}=\sum_{j=1}^n X_{p,\delta}^{j} \frac{\di}{\di z_j}\in T^{1,0}\C^n\big|_{\Om_{p,\delta}}$ with $X_{p,\delta}^{j}\in \R$ for $j={1,2, ...,n}$ . Let $q$ be the projection of $H_{p}^{-1}\Phi^{-1}_{p,\delta}(z_{p,\delta})$ to the boundary $b\Om$. Thus, $q\in H_p^{-1} (E_{p,\delta})\cap b\Om$. 
Let $z_{q,\delta}=\Psi_{p\to q,\delta}(z_{p,\delta})$. Then $z_{q,\delta}$ is of the form $z_{q,\delta}= (-t,0')$ where $t=d_{\Om_{q,\delta}}(z_{q,\delta})\approx d_{\Om_{p,\delta}}(z_{p,\delta})$  (as verified in \eqref{approx} below) independently in $\delta$.
By Lemma~\ref{lm_n2},
	\begin{equation}
 |\det J\Psi_{q\to p,\delta} (z_{q,\delta})|\le  C
\end{equation}
for sufficiently small $\delta$. 
Since 
$$J (\Psi_{p\to q,\delta}(z_{p,\delta})) \cdot  J( \Psi_{q\to p,\delta}(z_{q,\delta})) =I_n,$$
we conclude that
\begin{equation}\label{J p to q}
 |\det J (\Psi_{p\to q,\delta}(z_{p,\delta})) |\ge  C.
\end{equation}

By the invariance property of the Bergman metric under biholomorphic mappings, 
$$\mathfrak B_{\Om_{p,\delta}}(z_{p,\delta},X_{p,\delta})=\mathfrak B_{\Om_{q,\delta}}(z_{q,\delta}, X_{q,\delta})=\mathfrak B_{q,1}(z_{q,1}, X_{q,1})$$
where $X_{q,\delta}=(J \Psi_{p\to q,\delta})X_{p,\delta}$ and $X_{q,1}= (J \Phi^{-1}_{q,\delta})X_{q,\delta}$.
 By \cite[Theorem 2]{BoStYu95} (see in Appendix below), it follows
	$$\mathfrak B_{\Om_{q,1}}(z_{q,1},X_{q,1})\ge \frac{1}{2}\Big|\left(J\Phi_{q,\eta}\big|_{\eta =d_{\Om_{p,1}}(z_{q,1})}\right)\cdot X_{q,1}\Big|\mathfrak B_{\Om_{q,0}}(\omega, \hat X)
	\ge c \Big|\left(J\Phi_{q,\eta}\big|_{\eta =d_{\Om_{p,1}}(z_{q,1})}\right) \cdot X_{q,1}\Big|.$$
where $\omega=(-1,0')$, $\hat X$ is a unit vector defined  in Theorem \ref{Thm1_A}, and $c=\inf_{|\hat X|=1} \mathfrak B_{q,0}(\omega, \hat X)>0$. Thus $c$ is independent of $z_{q,1}$ and $X_{q,1}$; 
it depends only on the multitype $\mathcal M(q)$.
We also estimate
	\begin{align*}
\Big|	\left(J\Phi_{q,\eta}\big|_{\eta =d_{\Om_{p,1}}(z_{q,1})}\right)\cdot X_{q,1}\Big|=&\Big|\left(J\Phi_{q,\eta}\big|_{\eta =d_{\Om_{p,1}}(z_{q,1})}\right)\cdot  \big(J \Phi^{-1}_{q,\delta}\big) X_{q,\delta}\Big|\\
=& \Big| \left(J\Phi_{q,\eta}\big|_{\eta = \delta^{-1} d_{\Om_{p,1}}(z_{q,1})}\right)\cdot X_{q,\delta}\Big|\\
=& \left| \sum_{j=1}^n \frac{|X^j_{q,\delta}|^2}{\left(\delta^{-1}d_{\Om_{p,1}(z_{q,1})}\right)^{2/m_{q,j}}}\right|^{\frac{1}{2}}\\
\ge& \frac{|X_{q,\delta}|}{\left(\delta^{-1}d_{\Om_{p,1}(z_{q,1})}\right)^{1/m_{q,n}}}\\
=& \frac{|(J \Psi_{p\to q,\delta})X_{p,\delta}|}{\left(\delta^{-1}d_{\Om_{p,1}(z_{q,1})}\right)^{1/m_{q,n}}}\\
\ge& c\frac{|X_{p,\delta}|}{\left(d_{\Om_{p,\delta}(z_{q,\delta})}\right)^{1/m_{q,n}}}\\
	\end{align*}
where the last inequality follows by \eqref{J p to q} and
\begin{equation}\label{approx}
 d_{\Om_{p,\delta}}(z_p,\delta)\approx \left|r_{p,\delta}(z_{p,\delta})\right| =  \left|\frac{r_{p,1}(z_{p,1})}{\delta}\right| \approx \left| \frac{r_{q,1}(z_{q,1})}{\delta}\right| \approx  \frac{d_{\Om_{q,1}}(z_{q,1})}{\delta}\approx d_{\Om_{q,\delta}}(z_q,\delta).
\end{equation}

Therefore, we conclude that 
	$$B_{\Om_{p,\delta}}(z,X)\ge c d^{-\kappa_p}_{\Om_{p,\delta}} (z)|X|.$$
	\end{proof}
\begin{lemma}\label{lm_n4} The second condition in Definition \ref{defn:good dilation} satisfies. In particular, one has, for $\chi_j\in C_c^\infty (U_o)$ such that $\chi_1\prec \chi_2\prec \chi_3$ and for every $s,m\ge 0$, the estimates
	\begin{equation}\label{eqn:good Sobolev inequ. re: scaling, delta1}
	\no{\chi_1 B_{\Om_{p,\delta}} u}_{L^2_s(\Om_{p,\delta})}^{2}\le c_{s,m}\left(\no{\chi_2  u}_{L^2_{s}(\Om_{p,\delta})}^{2} +\no{\chi_3 B_{\Om_{p,\delta}} u}^2_{L^2_{-m}(\Om_{p,\delta})}\right)
	\end{equation}
	holds for all $u\in L^2_{s}(U_o\cap \Om_{p,\delta})\cap L^2(\Om_{p,\delta})$, where the constant $c_{s,m}$ is independent of $\delta$.
\end{lemma}

\begin{proof}[Proof of Lemma \ref{lm_n4}.] 
	By \cite[Section 5]{KhZa12},	 a lower bound of  the Bergman metric implies the existence of a family of bounded functions $\{\phi^\eta\}_{\eta>0}$ such that 
	$$i\di\dib \phi^\eta(X,X)\ge C \eta^{-2\kappa'}|X|^2\qquad \text{ on $S_\eta\cap V$,}$$
	where $S_\eta=\{\hat z\in \Om_{p,\delta}: -\eta <r_{p,\delta}(\hat z)<0\}$ and any $\kappa'<\kappa$, $C$ is independent of $\delta$ and $\eta$.	
	Thus, by \cite[Theorem 2.1]{Cat87} the subelliptic estimates for $\Om_{p,\delta}$ hold in a neighborhood of the origin with  uniformly in $\delta$. Consequently, the $L^2$ pseudolocal estimates in a neigborhood of the origin hold for the Bergman projection $B_{\Om_{p,\delta}}$ uniformly in $\delta$. 
\end{proof}

\section{Appendix}

\begin{theorem}[Theorem 2 in \cite{BoStYu95}]\label{Thm1_A}
	Let $\Om_{q,1}$ be an $h$-extendible at the boundary point $q$ with multitype $(1, m_{q,2},\cdots, m_{q,n})$ and local model $\Om_{q,0}$. If $\Gamma$ be a nontangential cone in $\Om_{q,1}$ with vertex at $p$, then 
	$$\underset{z\in \Gamma, z\to q}{\lim}\frac{\mathfrak B_{\Om_{q,1}}(z,X)}{\left|\left(J\Phi_{q,\eta}\big|_{\eta =d_{\Om_{q,1}}(z)}\right)\cdot (X)\right|}={\mathfrak B}_{\Om_{q,0}}(\omega, \hat X).$$
	Here $\hat X$ is a unit vector defined by $\hat X=\lim_{z\to p}\dfrac{\left(J\Phi_{q,\eta}\big|_{\eta =d_{\Om_{q,1}}(z)}\right)\cdot (X)}{\left|\left(J\Phi_{q,\eta}\big|_{\eta =d_{\Om_{q,1}}(z)}\right)\cdot (X)\right|}$ and $\omega=(-1,0,\cdots, 0)$.
	\end{theorem}

Let $X$ and $Y$ be two complex manifolds. Denote $\text{Hol}(Y,X)$ the set of holomorphic maps from $Y$ to $X$. Now, we recall the definition of the normal family and taut complex mainfold in \cite{Aba89}
\begin{definition} Let ${\mathcal F}=\{f_\alpha\}_{\alpha\in \mathcal A}$ be a family in $\text{Hol}(X,Y)$. We say that $\mathcal F$ is a normal family if every subsequence $\{f_j\} \subseteq \mathcal F$ either
	\begin{itemize}
		\item ({\it normal convergence}) has a subsequence that converges uniformly
		on compact subsets of $X$; or 
		\item ({\it compact divergence}) has a subsequence $\{f_{j_k}\}$ such that, for each compact $K\subseteq X$  and each compact $L\subseteq Y$, there is a number $N$ so large that $f_{j_k} (K) \cap  L = \emptyset$  whenever $k \ge N$.
	\end{itemize}
\end{definition}
Let $\Delta$ be a unit disk in $\C$. 
\begin{definition}
	A complex manifold $Y$ is {\it taut} if $\text{Hol}(\Delta, Y)$ is a normal family. 
	\end{definition}
\begin{theorem}[Theorem 2.1.2 in \cite{Aba89}] Let $Y$ be a taut complex manifold. Then $\text{Hol}(X,Y)$ is a normal family for every complex manifold $X$. 
\end{theorem}

\begin{theorem}[Theorem 3.1 in \cite{Yu95}]\label{thm:h extendible is taut}
	Every $h$-extendible model is taut.
	\end{theorem}
\begin{theorem}[The main theorem in \cite{Niko02}]\label{thm:Niko02}	Let $\Om_{p,1}$ be an $h$-extendible at the boundary point $p$. Then any two models  for $\Om_{p,1}$ at $p$ are biholomorphically equivalent and determinant of its Jacobian mapping is bounded away from zero in a neighborhood of the origin.  
	\end{theorem}

	\bibliographystyle{alpha}
	\bibliography{mybib}
	
\end{document}